\documentclass[11pt]{article}

\RequirePackage[OT1]{fontenc}
\RequirePackage{amsthm,amsmath,graphicx,epsfig,amssymb,comment}
\RequirePackage[numbers]{natbib}
\RequirePackage[colorlinks,citecolor=blue,urlcolor=blue]{hyperref}

\title{On robust stopping times for detecting changes in distribution}

\author{
Y. Golubev\thanks{Aix-Marseille Universit\'e, Centre de Math\'ematiques et Informatique, 
39, rue F. Joliot Curie, 13453 Marseille, France and Institute for Information Transmission Problems, Moscow, Russia.  \texttt{e-mail: golubev.yuri@gmail.com}}
\ and M. Safarian
\thanks{ Karlsruhe Institute of Technology,
Department of Economics,
Bl\"ucherstrasse 17,
76185 Karlsruhe,
Germany. \texttt{e-mail: mher.safarian@kit.edu}}
}

\date{}

%\numberwithin{equation}{section}
%\theoremstyle{plain}
\newtheorem{thm}{Theorem}[section]
\newtheorem{lemma}{Lemma}[section]

\begin{document}

\maketitle

%\affiliation{Aix-Marseille University and Karlsruhe Institute of Technology}

%\runauthor{Golubev, Y. \& Safarian, M.}

\begin{abstract}
 Let $X_1,X_2,\ldots $ be independent random variables  observed sequentially and such that $X_1,\ldots,X_{\theta-1}$ have a common   probability density $p_0$, while $X_\theta,X_{\theta+1},\ldots $ are all distributed  
according to  $p_1\neq p_0$. It is assumed that   $p_0$ and $p_1$ are known, but 
the time change $\theta\in \mathbb{Z}^+$ is unknown and the goal is   
to construct a stopping time $\tau$ that detects the change-point $\theta$ 
as soon as possible.  The existing approaches to this problem  rely essentially on some 
a priori information about $\theta$. For instance, in Bayes approaches, it is assumed that $\theta$ is a random variable with a known probability distribution.   In methods related to hypothesis 
testing, this a priori information is hidden in  the so-called average run length.  The main goal in this paper  is to construct  stopping times which do not make use of a priori information about $\theta$, but have nearly Bayesian detection delays.  More precisely, we propose stopping times solving approximately the following  problem: 
\begin{equation*}
\begin{split}
&\quad \Delta(\theta;\tau^\alpha)\rightarrow\min_{\tau^\alpha}\quad  
\textbf{subject to}\quad  \alpha(\theta;\tau^\alpha)\le \alpha
\   \textbf{ for any}\ \theta\ge1,
\end{split}
\end{equation*}
where  $\alpha(\theta;\tau)=\mathbf{P}_\theta\bigl\{\tau<\theta \bigr\}$ is  \textit{the false alarm probability}  and $\Delta(\theta;\tau)=\mathbf{E}_\theta(\tau-\theta)_+$ is  \textit{the average detection delay},
%In this paper, we construct  $\widetilde{\tau}^\alpha$ such that
%\[
% \max_{\theta\ge 1}\alpha(\theta;\widetilde{\tau}^\alpha)\le \alpha\  \text{and}\ 
%\Delta(\theta;\widetilde{\tau}^\alpha)\le (1+o(1))\log(\theta/\alpha), \ \text{as} \  \theta/\alpha%\rightarrow\infty,
%\]  
and explain  why such stopping times are robust w.r.t. a priori information about $\theta$.

\medskip
\noindent
\textbf{Keywords:} 
{stopping time, false alarm probability, average detection delay, Bayes stopping time, CUSUM method, multiple hypothesis testing.}

\medskip
\noindent
\textbf{2000 Mathematics Subject Classification:} 
Primary 62L10, 62L15; secondary 60G40. 
\end{abstract}

\section{Introduction}
Let $X_1,X_2,\ldots $ be independent random variables  observed sequentially. It is assumed $X_1,\ldots,X_{\theta-1}$ have a common probability density $p_0(x),\, x\in \mathbb{R}^d$, while $X_\theta,X_{\theta+1},\ldots $ are all distributed  
according to a probability density $p_1(x)\, x\in \mathbb{R}^d$. This paper deals with the simplest change-point detection problem  where it is supposed  $p_0$ and $p_1$ are known, but 
the time change $\theta\in \mathbb{Z}^+$ is unknown, and the goal is to construct a stopping time $\tau\in \mathbb{Z}^+$ that detects   $\theta$ 
as soon as possible. 
The existing approaches to this problem  rely essentially on some 
a priori information about $\theta$. For instance, in Bayes approaches, it is assumed that $\theta$ is a random variable with a known probability distribution, see e.g.  \cite{Shir}.   In methods related to hypothesis 
testing, this a priori information is hidden in  the so-called average run length, see  e.g. \cite{Page}. Our main goal in this paper  is to construct robust stopping times which do not make use of a priori information about $\theta$, but have detection delays close to  Bayes ones. 

In order to be more precise, denote by 
$\mathbf{P}_\theta$ the probability distribution of $(X_1,\ldots,X_{\theta-1},
X_{\theta},\ldots)^\top$ and by $\mathbf{E}_\theta$ the expectation with respect to this measure. In this paper, we characterize
 $\tau$ with the help of two   functions in $\theta$:
\begin{itemize}
\item 
\textit{false alarm probability} 
$$\alpha(\theta;\tau)=\mathbf{P}_\theta \bigl\{\tau<\theta\bigr\};$$ 
\item \textit{average detection delay} 
\[
\Delta(\theta;\tau)= \mathbf{E}_\theta(\tau-\theta)_+,
\quad  \text{where}\ (x)_+=\max\{0,x\},
\]
\end{itemize}
and 
our goal is to construct stopping times  solving   the following  problem:
\begin{equation}\label{eq1}
\begin{split}
&\Delta(\theta;\tau^\alpha)\rightarrow\min_{\tau^\alpha} \quad
\textbf{subject to}\quad 
\alpha(\theta;\tau^\alpha)\le \alpha\quad 
 \textbf{for any} \quad \theta \ge1.
\end{split}
\end{equation}

The main difficulty in this problem is related to the fact that for a given stopping time $\tau^\alpha$  the average delay  $\Delta(\theta;\tau^\alpha)$ depends on $\theta$. This means that in order to compare two stopping times $\tau_1^\alpha$ and $\tau_2^\alpha$, one has to compare two  functions in $\theta\in \mathbb{Z}^+$. Obviously, this is not feasible from a   mathematical viewpoint and the principal objective in this paper  is to propose    stopping times providing good approximative solutions to  \eqref{eq1}. 
Notice also here that similar problems are common and well-known in statistics and there are reasonable  approaches to obtain their  solutions.  

In change-point detection, there are two standard methods   for constructing stopping times. 
\begin{itemize} 
\item
 \textbf{A Bayes approach.} 
The first Bayes change detection problem was stated in \cite{GR} 
for on-line quality control problem for continuous technological processes.
In detecting changes in distributions this approach assumes that $ \theta $ is a random variable  with a known  distribution
 $$
\pi_m=\mathbf{P}\{\theta=m\},\quad m=1,2,\ldots,  
$$ 
and the goal is to construct  a stopping time $\tau_\pi^\alpha$ that solves the averaged version of \eqref{eq1}, i.e., 
\begin{equation}\label{eq2}
\begin{split}
&\sum_{m=1}^\infty\pi_m\Delta(m;\tau_\pi^\alpha)\rightarrow \min_{\tau_\pi^\alpha}\quad  \textbf{subject to}\quad 
\sum_{m=1}^\infty\pi_m \alpha(m;\tau_\pi^\alpha)\le \alpha.
\end{split}
\end{equation}
Emphasize  that in contrast to \eqref{eq1}, this problem is well defined from a mathematical viewpoint, but its solution depends on  a priori law $\pi$.
%(see, e.g. \cite{Shir}). 

\item  \textbf{A hypothesis testing approach.} 
The first  non-Bayesian change detection algorithm based on sequential hypothesis testing  was proposed in \cite{Page}.
Denote by $X^n=(X_1,\ldots,X_n)^\top$ the observations till moment $n$. The main idea in this approach is to test sequentially 
\begin{equation}\label{eq3}
\begin{split}
&\textsl{simple hypothesis }\\
&{\rm H}_0^n : X^n \sim \prod_{i=1}^n{p}_0( x_i)\\
&\textsl{vs.  compound alternative}\\
&{\rm H}_1^n : X^n \sim \prod_{i=1}^{m-1}{p}_0( x_i)\prod_{i=m}^n {p}_1( x_i),\quad m\le  n. 
\end{split}
\end{equation}
So, stopping time $\tau$ is defined as follows:
\begin{itemize}
\item
if  ${\rm H}_0^{n}$ is accepted, the observations are continued,
 i.e., we test   ${\rm H}_0^{n+1}$
 vs. ${\rm H}_1^{n+1}$;\item If  ${\rm H}_1^{n}$ is accepted, then
we stop and   $\tau=n$.
\end{itemize}

\end{itemize}

\begin{comment}
\begin{figure}[h!]
\centering
\includegraphics[angle=0,width=0.9\textwidth,height=0.35\textheight]{fig0.png}
\caption{Average delays of two stopping times.   \label{Fig.1.1}}
\end{figure}
\end{comment}

In order to motivate  our idea of  robust stopping times, 
we discuss very briefly  basic statistical properties of the  above mentioned    approaches. 
 
\subsection{A Bayes approach}
Usually in this  approach the geometric a priori   distribution  
\[
\pi_m= \gamma(1-\gamma)^{m-1},\quad m=1,2,\ldots,\quad \gamma>0,
\]
 is used. Positive parameter $\gamma$ is assumed to  be known.
In this case, the optimal stopping time is given by the following famous theorem  \cite{Shir}: 
\begin{thm} \label{Th1} The optimal Bayes stopping time (see \eqref{eq2}) is given by
\begin{equation} \label{eq4}
\tau_\gamma^\alpha = \min\{k:\bar{\pi}(X^k)\ge 1- \alpha_\gamma\},
\end{equation}
where
\[
\bar{\pi}_\gamma(X^{k})= \mathbf{P}\bigl\{\theta \le k|X^{k}\bigr\},
\]
and $\alpha_\gamma\approx \alpha$ is a constant.
\end{thm}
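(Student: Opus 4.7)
The plan is to follow the classical Shiryaev reduction to an optimal stopping problem for the posterior process. First I would pass from the constrained problem \eqref{eq2} to its Lagrangian relaxation
\[
\mathcal{L}_c(\tau)=\sum_{m=1}^\infty \pi_m\Delta(m;\tau)+c\sum_{m=1}^\infty \pi_m\alpha(m;\tau)\rightarrow\min_\tau,
\]
with a multiplier $c>0$ to be fixed later. Writing $\mathbf{P}$ and $\mathbf{E}$ for the joint law of $(\theta,X_1,X_2,\ldots)$ when $\theta$ has prior $\pi$, one identifies
\[
\sum_{m}\pi_m\Delta(m;\tau)=\mathbf{E}(\tau-\theta)_+=\mathbf{E}\sum_{k=1}^{\tau-1}\mathbf{1}\{\theta\le k\},\qquad \sum_m\pi_m\alpha(m;\tau)=\mathbf{P}\{\tau<\theta\}=\mathbf{E}\,(1-\bar{\pi}_\gamma(X^\tau)).
\]
Because $\{\tau>k\}$ is measurable with respect to $\mathcal{F}_k=\sigma(X^k)$ and $\mathbf{E}[\mathbf{1}\{\theta\le k\}\mid \mathcal{F}_k]=\bar{\pi}_\gamma(X^k)$, the Lagrangian takes the manifestly adapted form
\[
\mathcal{L}_c(\tau)=\mathbf{E}\Bigl[\sum_{k=1}^{\tau-1}\bar{\pi}_\gamma(X^k)+c\bigl(1-\bar{\pi}_\gamma(X^\tau)\bigr)\Bigr].
\]

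Next I would exploit the memorylessness of the geometric prior: conditional on $\theta>k$ the shifted change-point $\theta-k$ is again geometric with parameter $\gamma$. A direct Bayes calculation then gives a recursion
\[
\bar{\pi}_\gamma(X^{k+1})=\Phi\bigl(\bar{\pi}_\gamma(X^k),X_{k+1}\bigr),
\qquad \Phi(\pi,x)=\frac{\bigl[\gamma+(1-\gamma)\pi\bigr]p_1(x)}{\bigl[\gamma+(1-\gamma)\pi\bigr]p_1(x)+(1-\gamma)(1-\pi)p_0(x)},
\]
so that $(\bar{\pi}_\gamma(X^k))_{k\ge1}$ is a Markov chain on $[0,1]$. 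Consequently $\mathcal{L}_c(\tau)$ is the cost of a standard Markovian optimal-stopping problem with instantaneous running cost $\pi\mapsto\pi$ and terminal cost $\pi\mapsto c(1-\pi)$. Applying the general Snell-envelope/DP theory, the value function
\[
V_c(\pi)=\min\bigl\{c(1-\pi),\ \pi+\mathbf{E}_\pi V_c(\Phi(\pi,X))\bigr\}
\]
is well defined and the optimal stopping rule is $\tau^*=\min\{k:\bar{\pi}_\gamma(X^k)\ge \pi^*(c)\}$ as soon as one shows that the continuation region $\{V_c(\pi)<c(1-\pi)\}$ is an interval $[0,\pi^*(c))$.

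The key technical step, and the main obstacle, is precisely this structural claim. I would establish it by showing that $V_c$ is concave and that $G_c(\pi):=c(1-\pi)-V_c(\pi)$ is non-decreasing in $\pi$: concavity is preserved by the DP operator because $\Phi(\pi,x)$ is a fractional linear, hence concave-monotone, transformation of $\pi$, and the operator $\pi\mapsto\mathbf{E}_\pi V_c(\Phi(\pi,X))$ maps concave functions to concave functions (this is the standard Shiryaev lemma, proved by monotone approximation of $V_c$ by finite-horizon value functions). Once the threshold structure is in hand, the Lagrangian optimum has the form stated in \eqref{eq4}. Finally, to recover the constrained problem \eqref{eq2}, I would verify that $\alpha(\tau^*)=\mathbf{E}(1-\bar{\pi}_\gamma(X^{\tau^*}))$ is non-decreasing in the threshold $\pi^*(c)$ (continuity being the only delicate point because $\bar{\pi}_\gamma$ has a discrete time index), and calibrate $c$ so that the threshold equals $1-\alpha_\gamma$ with $\alpha_\gamma\approx\alpha$, up to the unavoidable overshoot coming from the lattice-valued nature of $\tau$.
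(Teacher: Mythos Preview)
The paper does not give its own proof of Theorem~\ref{Th1}; it simply attributes the result to Shiryaev \cite{Shir} and states it as background. There is therefore nothing in the paper to compare your argument against.

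That said, your proposal is exactly the classical Shiryaev reduction, and as a sketch it is correct: Lagrangian relaxation, rewriting the risk in terms of the posterior $\bar\pi_\gamma(X^k)$, the Markov property of the posterior under the geometric prior, and the Snell-envelope/DP characterisation leading to a threshold rule. One small caution on your justification of the threshold structure: saying ``$\Phi(\pi,x)$ is fractional linear, hence concave-monotone'' is not by itself enough, because the law of $X$ in $\mathbf{E}_\pi V_c(\Phi(\pi,X))$ also depends on $\pi$ (it is the mixture $[\gamma+(1-\gamma)\pi]p_1+(1-\gamma)(1-\pi)p_0$), so you are not simply composing a concave function with a fixed map. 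The actual argument in \cite{Shir} rewrites this expectation as an integral that is manifestly affine in $\pi$ before applying $V_c$, which is what yields preservation of concavity. You allude to this as ``the standard Shiryaev lemma'', so you are aware a separate step is needed; just make sure the write-up does not rest on the fractional-linear remark alone.
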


Notice  that the geometric a priori distribution results in the  following recursive  formula for  a posteriori probability (see, e.g., \cite{Shir}): 
\begin{equation}\label{eq5}
\begin{split}
&\bar{\pi}_\gamma(X^k)= \\ &\quad=\frac{[\gamma+(1-\gamma) \bar{\pi}_\gamma(X^{k-1}) ]p_1(X_k) }{[\gamma+(1-\gamma) \bar{\pi}_\gamma(X^{k-1}) ]p_1(X_k) +[1-\bar{\pi}_\gamma(X^{k-1})](1-\gamma)p_0(X_k)}.
\end{split}
\end{equation}
So, if we denote for brevity
\[
\rho_\gamma(X^k)=\frac{\bar{\pi}_\gamma(X^k)}{1-\bar{\pi}_\gamma(X^k)},
\]
then  \eqref{eq5} may be rewritten in the following equivalent form:
\begin{equation}\label{eq6}
\rho_\gamma(X^k)=\frac{\gamma+\rho_\gamma(X^{k-1})}{1-\gamma}\times\frac{p_1(X_k)}{p_0(X_k)}.
\end{equation}

From this equation   we see, in particular, that the Bayes stopping time  depends  on $\gamma$ that  is hardly known in practice. 
In   statistics, in order to avoid such dependence,  the uniform
a priori distribution is usually used. Let's look how  this idea works in change point detection.    
The uniform a priori distribution assumes that $\gamma=0$ and  in this case we obtain immediately from \eqref{eq6} 
\begin{equation*}
\begin{split}
\rho_0(X^k) =\rho_0(X^{k-1})\times \frac{p_1(X_k)}{p_0(X_k)}.
\end{split}
\end{equation*} 
Therefore, for
\[
L_0(X^k)=\log[\rho_0(X^k)],
\]
we get 
\[
L_0(X^k)=\sum_{i=1}^k\log \frac{p_1(X_i)}{p_0(X_i)}.
\]

Hence, the optimal stopping time in the case of the uniform a priori distribution is given by
\begin{equation}\label{eq7}
\tau^\alpha_\circ =\min \bigl\{k: L_0(X^k)\ge t^\alpha\bigr\},
\end{equation}
where $t^\alpha$ is some constant. Fig.  \ref{Fig.2} shows a typical  trajectory of  $L_0(X^k), \, k=1,2,\ldots $, in detecting change in the Gaussian distribution with $\theta=80$. 

Computing the false alarm probability for this stopping time is not difficult and  based on the following simple fact.  Let
\[
\phi(\lambda)=\mathbf{E}_\infty\exp\biggl[\lambda \log\frac{p_1(X_1)}{p_0(X_1)}\biggr].
\]
\begin{lemma} \label{l1}For any $\lambda>0$
\begin{equation*}
\mathbf{E}_\infty\exp\bigl\{-\tau_\circ^\alpha\log[\phi(\lambda)]\bigr\}
\mathbf{1}\bigl(\tau_\circ^\alpha<\infty\bigr)\le 
\exp(-\lambda t^\alpha).
\end{equation*}
\end{lemma}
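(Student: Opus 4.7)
The plan is to build the standard exponential martingale associated with the log-likelihood ratio and to apply optional stopping. Define
\[
Z_k = \exp\bigl[\lambda L_0(X^k) - k \log\phi(\lambda)\bigr],\quad k=0,1,2,\ldots,
\]
with $Z_0=1$. Under $\mathbf{P}_\infty$ the increments $\log[p_1(X_i)/p_0(X_i)]$ are i.i.d.\ with exponential moment $\phi(\lambda)$, so a one-line conditional expectation shows that $(Z_k)$ is a nonnegative $\mathbf{P}_\infty$-martingale with $\mathbf{E}_\infty Z_k=1$ for every $k$. No case distinction on the sign of $\log\phi(\lambda)$ is required here, because $\phi(\lambda)$ is simply treated as a positive normalizing constant.

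Next I would localize $\tau_\circ^\alpha$ by the bounded stopping time $\tau_\circ^\alpha\wedge n$. The optional stopping theorem gives $\mathbf{E}_\infty Z_{\tau_\circ^\alpha\wedge n}=1$; discarding the nonnegative contribution on $\{\tau_\circ^\alpha>n\}$ yields
\[
\mathbf{E}_\infty Z_{\tau_\circ^\alpha}\,\mathbf{1}\bigl(\tau_\circ^\alpha\le n\bigr)\le 1.
\]
Since the integrand is nonnegative and monotone in $n$, monotone convergence as $n\to\infty$ produces $\mathbf{E}_\infty Z_{\tau_\circ^\alpha}\mathbf{1}(\tau_\circ^\alpha<\infty)\le 1$.

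Finally I would exploit the threshold that defines $\tau_\circ^\alpha$: on the event $\{\tau_\circ^\alpha<\infty\}$ one has $L_0(X^{\tau_\circ^\alpha})\ge t^\alpha$ by \eqref{eq7}, and because $\lambda>0$ this implies
\[
Z_{\tau_\circ^\alpha}\;\ge\;\exp\bigl[\lambda t^\alpha - \tau_\circ^\alpha\log\phi(\lambda)\bigr]
\;=\;e^{\lambda t^\alpha}\exp\bigl\{-\tau_\circ^\alpha\log[\phi(\lambda)]\bigr\}.
\]
Inserting this pointwise bound into the martingale inequality of the previous paragraph and dividing through by $e^{\lambda t^\alpha}$ gives exactly the claimed estimate.

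The only place that demands care is the passage to the limit $n\to\infty$, since $\tau_\circ^\alpha$ is not assumed a.s.\ finite and $-\tau_\circ^\alpha\log\phi(\lambda)$ may be of either sign; the localization argument above handles this cleanly because $Z_k\ge 0$, so monotone convergence applies and no uniform integrability is needed. Everything else is a routine consequence of the Wald/exponential-martingale identity for i.i.d.\ sums.
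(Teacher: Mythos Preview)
Your proof is correct and follows the same exponential-martingale route as the paper: define $Z_k=\exp\bigl[\lambda L_0(X^k)-k\log\phi(\lambda)\bigr]$, use optional stopping, and exploit $L_0(X^{\tau_\circ^\alpha})\ge t^\alpha$ on $\{\tau_\circ^\alpha<\infty\}$. Your localization via $\tau_\circ^\alpha\wedge n$ and monotone convergence is in fact more careful than the paper, which simply writes $\mathbf{E}_\infty Y_{\tau_\circ^\alpha}=1$ without justifying optional stopping at the possibly unbounded time.
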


It follows immediately from the definition of $\phi(\lambda)$ that if $\lambda=1$, then $\phi(\lambda)=1$. So, by this Lemma we get
\[
\mathbf{P}_\infty\bigl\{\tau_\circ^\alpha<\infty\bigr\}\le \exp(-t^\alpha).
\]

As to the average  detection delay, it can  be easily computed with the help of the famous Wald
identity \cite{W,B}.   
The next theorem summarizes principal properties of  $\tau^\alpha_\circ$.
Let  us assume that
\[
\mu_0\stackrel{\rm def}{=}\int  \log \frac{p_0(x)}{p_1(x)} p_0(x)\,dx >0
\quad\text{and}\quad 
\mu_1\stackrel{\rm def}{=}\int  \log \frac{p_1(x)}{p_0(x)} p_1(x)\,dx >0.
\]

\begin{thm} Let 
$
t^\alpha=\log(1/\alpha).
$
Then for  $\tau_\circ^\alpha$ defined by \eqref{eq7} we have
\begin{align*}
&\alpha(\theta;\tau_\circ^\alpha)\le \alpha,
\\
&\Delta(\theta;\tau_\circ^\alpha)= \frac{\log(1/\alpha)+\theta\mu_0}{\mu_1}.
\end{align*}
\end{thm}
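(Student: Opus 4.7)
The plan is to treat the two assertions separately, leaning on the results already in the excerpt.

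For the false alarm bound, my first move is to observe that the event $\{\tau_\circ^\alpha<\theta\}$ is measurable with respect to $\sigma(X_1,\ldots,X_{\theta-1})$, and under $\mathbf{P}_\theta$ this $\sigma$-algebra carries exactly the $\mathbf{P}_\infty$-distribution, since all pre-change observations are drawn from $p_0$. Therefore
\[
\alpha(\theta;\tau_\circ^\alpha)=\mathbf{P}_\theta\{\tau_\circ^\alpha<\theta\}=\mathbf{P}_\infty\{\tau_\circ^\alpha<\theta\}\le\mathbf{P}_\infty\{\tau_\circ^\alpha<\infty\}.
\]
Now I invoke Lemma \ref{l1} with the special choice $\lambda=1$, noting that $\phi(1)=\mathbf{E}_\infty[p_1(X_1)/p_0(X_1)]=\int p_1(x)\,dx=1$, so $\log\phi(1)=0$ and the left-hand side collapses to $\mathbf{P}_\infty\{\tau_\circ^\alpha<\infty\}$. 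The lemma's bound is then $e^{-t^\alpha}=\alpha$, giving the claim.

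For the average detection delay, I would apply a Wald-type identity to the random walk $L_0(X^k)=\sum_{i=1}^k Y_i$ with $Y_i=\log[p_1(X_i)/p_0(X_i)]$. Under $\mathbf{P}_\theta$ the increments are still independent, but the mean drift changes: $\mathbf{E}_\theta Y_i=-\mu_0$ for $i<\theta$ and $\mathbf{E}_\theta Y_i=\mu_1$ for $i\ge\theta$. Writing
\[
L_0(X^{\tau_\circ^\alpha})=\sum_{i=1}^{\theta-1}Y_i+\sum_{i=\theta}^{\tau_\circ^\alpha}Y_i\,\mathbf{1}(\tau_\circ^\alpha\ge\theta),
\]
and applying Wald's identity to each sum (using that $\{\tau_\circ^\alpha\ge i\}\in\sigma(X^{i-1})$), the expectation becomes $-(\theta-1)\mu_0+\mu_1\mathbf{E}_\theta(\tau_\circ^\alpha-\theta+1)_+$ up to contributions from the false-alarm event, which is negligible by the first part. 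Equating $\mathbf{E}_\theta L_0(X^{\tau_\circ^\alpha})$ to the threshold $t^\alpha=\log(1/\alpha)$ (ignoring the overshoot across the boundary) and solving for $\mathbf{E}_\theta(\tau_\circ^\alpha-\theta)_+$ gives the stated formula $(\log(1/\alpha)+\theta\mu_0)/\mu_1$ to leading order.

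The delicate part is the second step. Several nontrivial points must be handled to make the Wald computation rigorous: one needs $\mathbf{E}_\theta\tau_\circ^\alpha<\infty$ (which follows from the positive post-change drift $\mu_1>0$ and a standard exponential tail bound on the time to cross $t^\alpha$ from any starting value); one must justify interchanging sums and expectations despite the sign change in the drift at time $\theta$; and one has to argue that the overshoot $L_0(X^{\tau_\circ^\alpha})-t^\alpha$ as well as the contribution from $\{\tau_\circ^\alpha<\theta\}$ are small enough that the displayed equality holds (either as an asymptotic identity in $\log(1/\alpha)$, or up to $O(1)$ boundary terms that the statement absorbs into the $\theta\mu_0$ rather than $(\theta-1)\mu_0$ convention). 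This overshoot/renewal step is the main obstacle, and I would handle it by the usual renewal-theoretic argument for the CUSUM/SPRT-type walk.
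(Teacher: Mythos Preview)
Your proposal is correct and follows exactly the route the paper indicates: the false alarm bound via Lemma~\ref{l1} with $\lambda=1$ (after reducing $\mathbf{P}_\theta\{\tau_\circ^\alpha<\theta\}$ to $\mathbf{P}_\infty\{\tau_\circ^\alpha<\infty\}$), and the delay via Wald's identity applied to the two-phase random walk $L_0(X^k)$. The paper itself gives no detailed proof beyond naming these two ingredients, so your write-up is in fact a faithful expansion of its sketch; your caveats about overshoot and the $\theta$ versus $\theta-1$ discrepancy are well taken, since the displayed equality is meant as a leading-order formula rather than an exact identity.
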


Fig. \ref{Fig.2} illustrates this theorem showing  $L_0(X^k), \, k=1,\ldots,200$, in the case 
of the change  in the mean of  the Gaussian distribution with $\theta=80$.
\begin{figure}
\centering
\includegraphics[angle=0,width=0.9\textwidth,height=0.35\textheight]{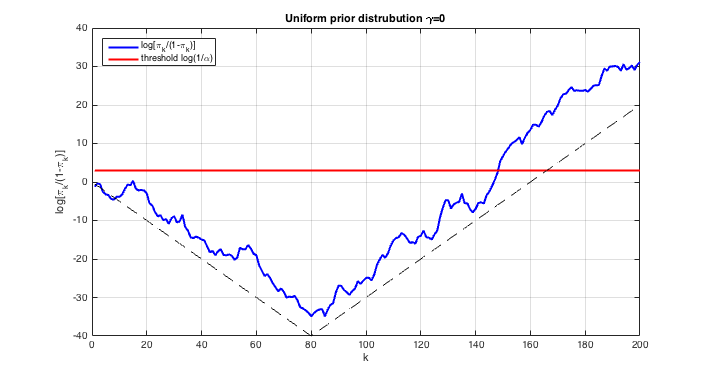}
\caption{Detecting change in the mean of Gaussian distribution with the help of $\tau^\alpha_\circ$. \label{Fig.2} }
\end{figure}

We would like to
emphasize that the fact that $\Delta(\theta;\tau_\circ^\alpha)$ is linear in $\theta$ is  not good from  practical and theoretical viewpoints. 
In order to understand why it is so,
let us now turn back to the Bayes setting  assuming that $\gamma>0$. In this case the following theorem holds true.
\begin{thm} \label{Th3}  Suppose $\gamma>0$.
Then for  $\tau^\alpha_\gamma$ defined by  \eqref{eq4} we have
\begin{align}
&\max_{\theta\in \mathbb{Z}^+}\alpha(\theta;\tau_\gamma^\alpha)=1, \nonumber
\\
&\Delta(\theta;\tau_\gamma^\alpha) =  \frac{\log[1/(\gamma \alpha)]}{\mu_1}+O(1),\quad 
\text{as} \quad \gamma, \alpha\rightarrow 0.
\label{eq8}
\end{align}
\end{thm}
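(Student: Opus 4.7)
\textbf{Proof plan for Theorem~\ref{Th3}.} The two claims have rather different flavours and I would attack them separately.

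For the first identity, I would begin with the elementary observation that under $\mathbf{P}_\theta$ the event $\{\tau_\gamma^\alpha<\theta\}$ depends only on $X_1,\ldots,X_{\theta-1}$, which are i.i.d.\ with density $p_0$. Hence $\alpha(\theta;\tau_\gamma^\alpha)=\mathbf{P}_\infty\{\tau_\gamma^\alpha<\theta\}$, and monotone convergence gives
$$
\sup_{\theta\in\mathbb{Z}^+}\alpha(\theta;\tau_\gamma^\alpha)=\mathbf{P}_\infty\{\tau_\gamma^\alpha<\infty\}.
$$
The task is therefore to prove that, even when no change ever occurs, $\bar\pi_\gamma(X^n)$ still crosses the level $1-\alpha_\gamma$ almost surely. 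For this I would analyse the Markov chain $\rho_\gamma(X^n)$ defined by \eqref{eq6} under $\mathbf{P}_\infty$. Since $\mathbf{E}_\infty\log[p_1(X_1)/p_0(X_1)]=-\mu_0<0$, the multiplicative factor $(1-\gamma)^{-1}p_1(X_n)/p_0(X_n)$ has negative mean log for all small $\gamma$, placing the chain in the stable regime of a random-coefficient AR(1) with an additive ``floor'' $\gamma/(1-\gamma)$. Standard perpetuity results then yield a unique stationary distribution $\nu_\gamma$ with unbounded support, and recurrence of the chain to every half-line $[A,\infty)$; in particular $\tau_\gamma^\alpha<\infty$ almost surely under $\mathbf{P}_\infty$.

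For the detection delay I would iterate \eqref{eq6} in logarithmic form:
$$
\log\rho_\gamma(X^n)-\log\rho_\gamma(X^{n-1})=\log\frac{p_1(X_n)}{p_0(X_n)}+\log\frac{1}{1-\gamma}+\log\!\left(1+\frac{\gamma}{\rho_\gamma(X^{n-1})}\right),
$$
sum from $n=\theta$ to $n=\tau_\gamma^\alpha$, and take $\mathbf{E}_\theta$. Since for $n\ge\theta$ the increments $\log[p_1(X_n)/p_0(X_n)]$ are i.i.d.\ with mean $\mu_1>0$, Wald's identity yields
$$
\mathbf{E}_\theta\bigl[\log\rho_\gamma(X^{\tau_\gamma^\alpha})-\log\rho_\gamma(X^{\theta-1})\bigr]=\mu_1\,\mathbf{E}_\theta(\tau_\gamma^\alpha-\theta+1)+R_\gamma,
$$
where $R_\gamma$ collects the per-step contribution $\log(1/(1-\gamma))=O(\gamma)$ and the nonnegative correction $\sum_{n=\theta}^{\tau_\gamma^\alpha}\log(1+\gamma/\rho_\gamma(X^{n-1}))$. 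On the left-hand side $\log\rho_\gamma(X^{\tau_\gamma^\alpha})=\log[(1-\alpha_\gamma)/\alpha_\gamma]+\text{overshoot}=\log(1/\alpha)+O(1)$, while the distribution of $\rho_\gamma(X^{\theta-1})$ under $\mathbf{P}_\theta$ equals its distribution under $\mathbf{P}_\infty$ and, by Part~1, tends to $\nu_\gamma$; the explicit perpetuity representation of $\nu_\gamma$ then shows $\log\rho=\log\gamma+O_{\mathbf{P}}(1)$ uniformly in $\theta$. Rearranging gives $\mu_1\,\Delta(\theta;\tau_\gamma^\alpha)=\log(1/\alpha)-\log\gamma+O(1)$, which is \eqref{eq8}.

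The main technical obstacle is controlling $R_\gamma$ and the starting value uniformly in $\theta$ in the regime $\gamma,\alpha\to 0$. The linear piece of $R_\gamma$ reduces to $\gamma\,\mathbf{E}_\theta(\tau_\gamma^\alpha-\theta)=o(1)$, which is self-consistent with the target estimate. The nonlinear piece is tamed by the observation that after the change $\rho_\gamma$ climbs essentially geometrically, $\rho_\gamma(X^n)\gtrsim\gamma\,e^{\mu_1(n-\theta)}$, so $\sum_{n\ge\theta}\log(1+\gamma/\rho_\gamma(X^{n-1}))$ is summable independently of $\theta$. For the starting value one needs finiteness of $\mathbf{E}_{\nu_\gamma}|\log\rho-\log\gamma|$, which follows from the perpetuity series once $-\mu_0-\log(1-\gamma)<0$. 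Together these estimates upgrade the heuristic to the $O(1)$ remainder of \eqref{eq8}.
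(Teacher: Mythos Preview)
The paper does not actually prove Theorem~\ref{Th3}; it only remarks that it ``may be proved with the help of the standard techniques described, e.g., in \cite{BN}.'' Your proposal is therefore not a departure from the paper's argument but a genuine fleshing-out of a result the authors leave to the literature. The overall strategy---reducing the first claim to $\mathbf{P}_\infty\{\tau_\gamma^\alpha<\infty\}=1$ via the perpetuity structure of \eqref{eq6}, and attacking the second via a Wald decomposition of $\log\rho_\gamma$ with overshoot control---is the natural one and is consistent with what the cited reference contains. (A small aside on Part~1: the theorem is stated for all $\gamma>0$, not only small $\gamma$; when $\gamma$ is large enough that $\mathbf{E}_\infty\log[(1-\gamma)^{-1}p_1(X_1)/p_0(X_1)]\ge 0$ the chain drifts to infinity and the conclusion is immediate, so your ``stable regime'' analysis covers the only interesting case.)

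One point in Part~2 deserves more care. When you sum from $n=\theta$ to $\tau_\gamma^\alpha$ and identify the law of $\rho_\gamma(X^{\theta-1})$ with the stationary law $\nu_\gamma$, both steps tacitly sit on the event $\{\tau_\gamma^\alpha\ge\theta\}$. What your identity actually computes is $\mathbf{E}_\theta[\tau_\gamma^\alpha-\theta\mid\tau_\gamma^\alpha\ge\theta]$, and the relevant starting law is that of $\rho_\gamma(X^{\theta-1})$ \emph{conditioned on the chain not yet having crossed the threshold}, which is not $\nu_\gamma$. For fixed $\theta$ and $\alpha\to 0$ the threshold runs to infinity, the conditioning becomes vacuous, and $\mathbf{P}_\theta\{\tau_\gamma^\alpha\ge\theta\}\to 1$, so the pointwise statement \eqref{eq8} survives as written. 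If, however, you want the $O(1)$ to be uniform in $\theta$---which the paper implicitly uses when it subsequently averages \eqref{eq8} over the geometric prior---you still need to show that the conditioned starting value obeys $\log\rho=\log\gamma+O(1)$ and to reconcile $\mathbf{E}_\theta(\tau-\theta)_+$ with the conditional expectation your Wald identity produces.
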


This theorem may be  proved with the help of the standard techniques  described, e.g., in 
\cite{BN}. 

Fig. \ref{Fig.3} illustrates typical  behavior of $\log[\rho_\gamma(X^k)] $ with $\gamma>0$.
Notice that if $\tau_\circ^\alpha$ is used  in the considered case, then we obtain by \eqref{eq8} 
\[
\mathbf{E}\Delta(\theta;\tau_\circ^\alpha)= \frac{\log(1/\alpha)}{\mu_1}+\frac{\mu_0}{\mu_1}\times \frac{1}{\gamma}.
\]
So, we see that this mean detection delay  is far away from the optimal Bayes one given by
\[
\mathbf{E}\Delta(\theta;\tau_\gamma^\alpha) =  \frac{\log(1/\alpha)}{\mu_1}+\frac{1}{\mu_1}\times\log{\frac{1}{\gamma}}+O(1),\quad 
\text{as} \quad \gamma, \alpha\rightarrow 0.
\]

\begin{figure}
\centering
\includegraphics[angle=0,width=0.9\textwidth,height=0.35\textheight]{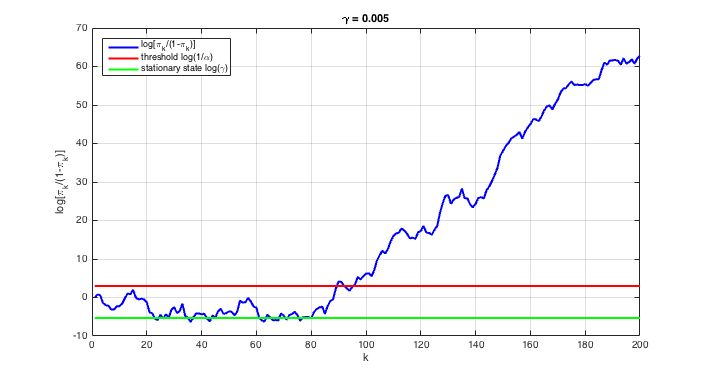}
\caption{Detecting change in the mean of Gaussian distribution with the help of $\tau^\alpha_\gamma$ ($\gamma=0.005$). \label{Fig.3} }
\end{figure}

Let us now summarize briefly main facts related  to the classical Bayes approach.
\begin{itemize}
\item if $\gamma=0$, then the average detection delay of the Bayes stopping time grows linearly in $\theta$;   
\item when $\gamma>0$,  the maximal false alarm probability  is not controlled.
\end{itemize}
In view of these facts it is clear that the   standard Bayes technique cannot provide  reasonable solutions to \eqref{eq1}.  
\subsection{A hypothesis testing approach}
The idea of this approach    is  based on the well-known sequential testing of two simple hypothesis \cite{W1}. 
However, we would like to emphasize that in contrast to the standard setting in \cite{W1}, in the change-point detection, this approach  has  a rather  heuristic character  since  here we  test a simple hypothesis   versus a compound alternative whose complexity grows with the  observations volume.   

In sequential hypothesis testing there are two  common  methods
\begin{itemize}
\item maximum likelihood;
\item Bayesian.
\end{itemize}

The maximum likelihood  test accepts   hypothesis ${\rm H}_1^n$ (see \eqref{eq3})    when
\begin{equation*}
\max_{k\le n}\frac{\prod_{i=1}^{k-1}  p_0(X_i)
\prod_{i=k}^n  p_1(X_i)} {\prod_{i=1}^n p_0(X_i)} \ge t^\alpha
\end{equation*}
or, equivalently,
\begin{equation*}
M(X^{n}) \ge t^\alpha,
\end{equation*}
 where 
\[
M(X^{n})=\max_{k \le n}
\sum_{i=k}^{n} \log\frac{ p_1(X_i)} { p_0(X_i)}.
\]
The threshold  $t^\alpha$ is  computed as  follows
\begin{equation*} 
t^\alpha=\min\Bigl\{t:\mathbf{P}_\infty\bigl\{
M(X^{n}) \ge t \bigr\} \le  \alpha\Bigr\},
\end{equation*}
where $\alpha$ is the first type error probability.
Notice that by Lemma \ref{l1} 
\[
\mathbf{P}_\infty\bigl\{
M(X^{n}) \ge x \bigr\}\le \exp(-x).
\]
Therefore the maximum likelihood test results in the following 
stopping time: 
\begin{equation}\label{eq9}
\tau_{\rm ml}^\alpha=\min\biggl\{n:\, 
M(X^{n}) \ge  \log \frac{1}{\alpha} \biggr\} .
\end{equation}

Notice also that   $M(X^{n})$ admits a simple recursive computation  \cite{Page}.  
Indeed, notice 
\begin{equation*}
\begin{split}
& \max_{k \le n}
\sum_{i=k}^{n} \log\frac{ p_1(X_i)} { p_0(X_i)}
\\& \quad =\max\biggl\{\log\frac{ p_1(X_{n})} { p_0(X_{n})},\log\frac{ p_1(X_{n})} { p_0(X_{n})}+\max_{k \le n-1}
\sum_{i=k}^{n-1} \log\frac{ p_1(X_i)} { p_0(X_i)}\biggr\}\\
&\quad = \log\frac{ p_1(X_{n})} { p_0(X_{n})}+\max\biggl\{0,\max_{k \le n-1}
\sum_{i=k}^{n-1} \log\frac{ p_1(X_i)} { p_0(X_i)}\biggr\}.
\end{split}
\end{equation*}
Therefore
\begin{equation} \label{eq10}
M(X^{n}) =\log\frac{ p_1(X_{n})} { p_0(X_{n})}+\bigl[
M(X^{n-1})\bigr]_+.
\end{equation}

This  method is usually called CUSUM algorithm.
It is well known that it is optimal in Lorden \cite{L} sense, i.e.,  for properly chosen $\alpha$, $\tau_{\rm ml}^\alpha$ minimizes 
\[
\sup_{\theta \in \mathbb{Z}^+}{\rm ess} \sup \mathbf{E}_\theta \bigl[(\tau-\theta)_+|X_1,\ldots,X_{\theta-1}\bigr]
\] 
in the class of stopping times $\bigl\{\tau: \mathbf{E}_\infty \tau\ge T \bigr\}$, see \cite{M}.

However, with this method cannot control the false alarm probability as shows the following theorem.
\begin{thm}\label{th-max}
For any $\alpha\in (0,1)$
\begin{align}
&\max_{\theta\in \mathbb{Z}^+}\alpha(\theta;\tau_{\rm ml}^\alpha)=1. \nonumber
\end{align}
As $\alpha\rightarrow 0$
\begin{align}
&\Delta(\theta;\tau_{\rm ml}^\alpha)= \frac{1+o(1)}{\mu_1}\log\frac{1}{\alpha}.
\nonumber
\end{align}
\end{thm}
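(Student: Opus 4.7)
The plan is as follows. For the first statement I would observe that the event $\{\tau_{\rm ml}^\alpha<\theta\}$ is $\sigma(X_1,\ldots,X_{\theta-1})$-measurable, and that under $\mathbf{P}_\theta$ the prechange block has the same joint distribution as under $\mathbf{P}_\infty$. Hence $\alpha(\theta;\tau_{\rm ml}^\alpha)=\mathbf{P}_\infty\{\tau_{\rm ml}^\alpha<\theta\}$, and monotone convergence as $\theta\to\infty$ reduces the first assertion to showing $\mathbf{P}_\infty\{\tau_{\rm ml}^\alpha<\infty\}=1$.

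To establish this a.s. finiteness I would use the CUSUM recursion \eqref{eq10} to view $W_n:=[M(X^n)]_+$ as a Lindley-type reflected random walk, $W_n=[W_{n-1}+Y_n]_+$, with $Y_n=\log[p_1(X_n)/p_0(X_n)]$ i.i.d. Under $\mathbf{P}_\infty$ these increments have negative mean $-\mu_0$ but also place positive mass on $(0,\infty)$: the identity $\mathbf{E}_\infty e^{Y_n}=1$, together with $p_0\neq p_1$, rules out $Y_n\le 0$ almost surely. Standard theory for reflected walks with negative drift then gives that $W_n$ is positive recurrent and returns to $0$ infinitely often; by the strong Markov property the successive excursions away from $0$ are i.i.d., and each overshoots the fixed level $t^\alpha$ with some strictly positive probability $p>0$. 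Borel--Cantelli for independent events therefore yields $\tau_{\rm ml}^\alpha<\infty$ a.s., completing part one.

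For the asymptotic on the detection delay, fix $\theta$ and work under $\mathbf{P}_\theta$. For $n\ge\theta$ I would decompose $M(X^n)=U_{n-\theta+1}+D_n$, where $U_m=\sum_{i=\theta}^{\theta+m-1}Y_i$ is a random walk with drift $+\mu_1$ and the remainder satisfies $0\le D_n\le [M(X^{\theta-1})]_+ +R$ with $R:=-\min_{m\ge 0}U_m$ a.s.\ finite because $U_m\to+\infty$. The lower bound $M(X^n)\ge U_{n-\theta+1}$ yields, on $\{\tau^\alpha_{\rm ml}\ge\theta\}$, the inequality $(\tau^\alpha_{\rm ml}-\theta)_+ +1\le\nu:=\min\{m\ge 1:U_m\ge t^\alpha\}$, and Wald's identity together with the $O(1)$ renewal overshoot gives $\mathbf{E}_\theta\nu=t^\alpha/\mu_1+O(1)$. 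The matching lower bound on the delay follows from $M(X^n)\le[M(X^{\theta-1})]_+ +R+U_{n-\theta+1}$, which implies that $\tau_{\rm ml}^\alpha$ cannot fire before $U_m$ crosses $t^\alpha-[M(X^{\theta-1})]_+-R$; Wald again delivers an expected first-passage time of $t^\alpha/\mu_1-O(1)$, provided the expectations of $R$ and $[M(X^{\theta-1})]_+$ remain bounded as $\alpha\to 0$. Substituting $t^\alpha=\log(1/\alpha)$ then yields $\Delta(\theta;\tau_{\rm ml}^\alpha)=(1+o(1))\log(1/\alpha)/\mu_1$.

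The only non-routine step is the uniform control of $\mathbf{E}_\theta[M(X^{\theta-1})]_+$ in $\theta$, which requires sharpening the recurrence argument of the second paragraph into ergodicity of the chain $W_n$ with a stationary law of finite mean. This goes one step beyond the excursion argument used for part one and relies on the exponential-moment assumption implicit in the existence of $\phi(\lambda)$ from Lemma~\ref{l1}; once this bound is in hand, the asymptotic Wald estimates above close both directions of the claimed delay.
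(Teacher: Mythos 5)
The paper offers no proof of Theorem \ref{th-max} to compare against --- it is stated without argument, in the same spirit as Theorem \ref{Th3} ("standard techniques"), so your proposal has to be judged on its own. It is essentially correct and is the standard route. Part one is right: $\alpha(\theta;\tau_{\rm ml}^\alpha)=\mathbf{P}_\infty\{\tau_{\rm ml}^\alpha<\theta\}$ increases to $\mathbf{P}_\infty\{\tau_{\rm ml}^\alpha<\infty\}$, and your regeneration/Borel--Cantelli argument for the Lindley recursion \eqref{eq10} gives a.s.\ finiteness; the only caveat is that the supremum is generally not attained at any finite $\theta$, so the paper's ``$\max$'' should be read as ``$\sup$'' (an abuse the authors commit elsewhere too). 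Part two is also the right decomposition $M(X^n)=U_{n-\theta+1}+\max\bigl(M(X^{\theta-1}),\,-\min_{0\le m\le n-\theta}U_m\bigr)$, and Wald plus elementary renewal theory closes both directions. Two small points you should tighten: (i) for the lower bound you must also discard the event $\{\tau_{\rm ml}^\alpha<\theta\}$, on which the delay is $0$; its probability is at most $\alpha$ by Lemma \ref{l1}, so it costs only $o(1)\cdot t^\alpha$ after a uniform-integrability step. (ii) The ``non-routine'' uniform control you flag is in fact free: reversing the order of summation, $M(X^{n})$ is the running maximum of a mean-one exponential martingale walk, so $\mathbf{P}_\infty\{M(X^{n})\ge x\}\le {\rm e}^{-x}$ uniformly in $n$ (this is exactly the Lemma \ref{l1} mechanism with $\lambda=1$), whence $\sup_n\mathbf{E}_\infty[M(X^{n})]_+\le 1$; the same bound applied to the increments $\log[p_0/p_1]$ under $\mathbf{P}_\theta$ gives $\mathbf{E}_\theta R\le 1$. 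No ergodicity or stationary-distribution argument is needed.
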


The  Bayesian test is based on the assumption that $\theta$ is uniformly distributed on $[1,n]$. So, this test  accepts ${\rm H}_1^n$ 
when
\begin{equation}\label{eq11}
S(X^{n})\stackrel{\rm def}{=}\sum_{k=1}^{n}\frac{\prod_{i=1}^{k-1}  p_0(X_i)
\prod_{i=k}^n  p_1(X_i)} {\prod_{i=1}^n p_0(X_i)} \ge  t^\alpha.
\end{equation}
Since
\begin{equation*}
S(X^{n})=
\sum_{k=1}^{n}
\prod_{i=k}^n \frac{ p_1(X_i)} {p_0(X_i)},
\end{equation*}
and
\begin{equation*}
\begin{split}
\sum_{k=1}^{n}
\prod_{i=k}^n \frac{ p_1(X_i)} {p_0(X_i)}&=\sum_{k=1}^{n-1}
\prod_{i=k}^n \frac{ p_1(X_i)} {p_0(X_i)}+\frac{ p_1(X_{n})} {p_0(X_{n})}\\
&
 =\biggl[1+ \sum_{k=1}^{n-1}
\prod_{i=k}^{n-1} \frac{ p_1(X_i)} {p_0(X_i)} \biggr]
\frac{ p_1(X_n)} {p_0(X_n)},
\end{split}
\end{equation*}
   the test statistics in \eqref{eq11} 
admits the following recursive  computation:
\begin{equation*}
S(X^{n})=\bigl[1+ 
S(X^{n-1}) \bigr]\times
\frac{ p_1(X_n)} {p_0(X_n)}.
\end{equation*}

So, the corresponding stopping time is given by
\begin{equation*}
\tau_{\rm S}^\alpha =\min\bigl\{k: S(X^k)\ge  t^\alpha \bigr\}.
\end{equation*}
 
In the literature, this method is known as Shirayev-Roberts (SR) algorithm. It was firstly proposed in \cite{Sh1961} and \cite{R1966}.
 In \cite{PT2009} and \cite{FS} it was shown that it minimizes the integral average delay 
\[
\frac{1}{\mathbf{E}_\infty \tau}\sum_{\theta=1}^\infty \mathbf{E}_\theta(\tau-\theta)_+
\]
over all stopping times $\tau$ with
$
\mathbf{E}_\infty\tau\ge T.
$
 More detailed statistical properties  of SR procedure can be found in \cite{PT}.

As one can see on Fig. \ref{Fig.4},  in practice, there is no significant 
difference between CUSUM and 
SR algorithms.  
\begin{figure}
\centering
\includegraphics[angle=0,width=0.8\textwidth,height=0.35\textheight]{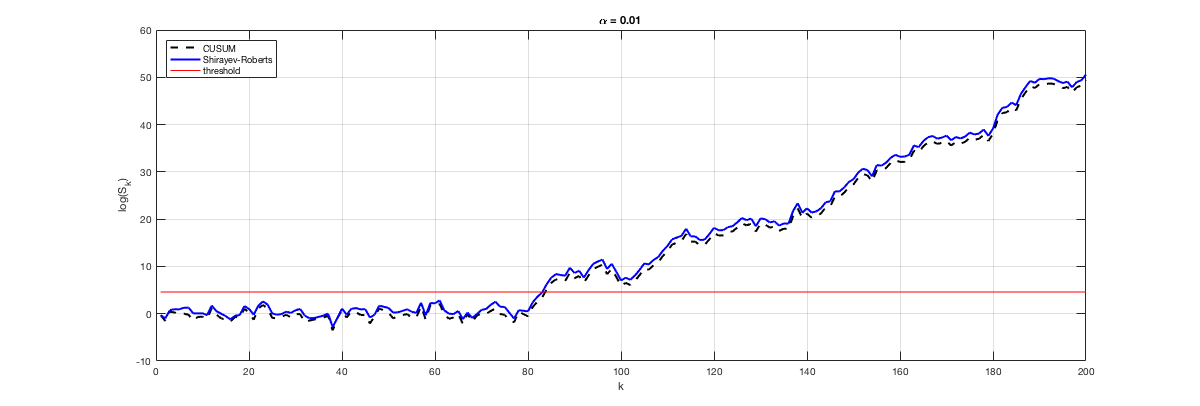}
\caption{Detecting change in the mean of Gaussian distribution with the help of CUSUM and SR procedures. \label{Fig.4} }
\end{figure}
  
  Notice also that for SR method the fact similar to Theorem 
\ref{th-max} holds true.
So,  the standard hypothesis testing methods results in stopping times with uncontrollable false alarm probabilities.

 \section{Robust stopping times}

The main idea in this paper is to make use  of multiple hypothesis testing methods for constructing stopping times.  
This can be done very easily by  replacing  the constant threshold  in the ML test \eqref{eq9} by one depending on $k$.  So, we define the  stopping time  
\begin{equation*}\label{rob-stopping}
\widetilde{\tau}^\alpha=\min\bigl\{k:\, 
M(X^{k}) \ge t^\alpha(k) \bigr\} .
\end{equation*}

In order to control the false alarm probability  and to obtain a nearly minimal average detection delay,  we are looking for    a \textit{minimal function} $t^\alpha(k), \ k=1,2,\ldots$, such that
\begin{equation*}
\mathbf{P}_\infty\bigl\{ \max_{k\ge \mathbb{Z}^+} \bigl[M(X^k) - t^\alpha(k)\bigr]
\ge 0 \bigr\} \le \alpha.
\end{equation*}

We begin our  construction of  $t^\alpha(k)$ with the following 
function:
\[
\varphi(x)=1+\log(x), \quad x\in \mathbb{R}^+,
\]
and define $m$-iterated  $\varphi(\cdot)$ by
\[
\Phi_{m}(x)=\varphi\bigl[\Phi_{m-1}(x)\bigr],\ \text{with}\ \Phi_1(x)=\varphi(x).
\]
Next, for given  $\epsilon\in(0,1)$,
define
\begin{equation}\label{eq12}
b_{m,\epsilon}(x)=-\log\bigg[
\frac{1}{\epsilon\Phi^\epsilon_m(x)}-\frac{1}{\epsilon\Phi^\epsilon_m(x+1)}\biggr],\quad x\in \mathbb{R}^+.
\end{equation}

Consider the following random variable:
\[
\zeta_{m,\epsilon} =\max_{k\in \mathbb{Z}^+} \bigl\{M(X^k)-b_{m,\epsilon}(k)\bigr\}.
\]
The next theorem plays a cornerstone role in our construction of robust stopping times.
\begin{thm} \label{th.5}
For any $\epsilon\in (0,1)$,  $m\ge 1$, 
and  $x> -\log(1-0.2075/2)\approx 0.11$ 
\begin{equation*}
\mathbf{P}\bigl\{\zeta_{m,\epsilon}\ge x\bigr\}\le 1-\exp\bigl\{-{\rm e}^{-x}\bigl[\epsilon^{-1}+{\rm e}^{-x}\bigr]\bigr\}.
\end{equation*}
\end{thm}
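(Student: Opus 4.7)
The plan is to combine a pointwise tail bound on $M(X^k)$, the telescoping structure of the threshold $b_{m,\epsilon}$, and a refinement that upgrades the resulting union bound into the $1-\exp(-\cdot)$ form with the extra $e^{-2x}$ correction. First, I would establish the pointwise estimate $\mathbf{P}\{M(X^k)\ge t\}\le e^{-t}$ for every $k\ge1$ and $t>0$. Under $\mathbf{P}=\mathbf{P}_\infty$, the i.i.d.\ structure of $X_1,\ldots,X_k$ together with the index reversal $i\mapsto k-i+1$ shows that
\[
M(X^k)\;=\;\max_{1\le j\le k}\sum_{i=j}^{k}\log\frac{p_1(X_i)}{p_0(X_i)}\;\stackrel{d}{=}\;\max_{1\le j\le k}L_0(X^j),
\]
and since $\exp L_0(X^j)$ is a mean-$1$ nonnegative martingale, Doob's maximal inequality --- which is essentially the $\lambda=1$ case of Lemma~\ref{l1} --- delivers the bound.

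Next, because $\varphi(1)=1+\log 1=1$, a one-line induction gives $\Phi_m(1)=1$ for every $m\ge1$; combined with $\Phi_m(k)\to\infty$ as $k\to\infty$, the definition \eqref{eq12} of $b_{m,\epsilon}$ telescopes to
\[
\sum_{k=1}^{\infty}e^{-b_{m,\epsilon}(k)}\;=\;\frac{1}{\epsilon}\sum_{k=1}^{\infty}\bigl[\Phi_m^{-\epsilon}(k)-\Phi_m^{-\epsilon}(k+1)\bigr]\;=\;\frac{1}{\epsilon}.
\]
Assembling the two steps via a plain union bound already yields $\mathbf{P}\{\zeta_{m,\epsilon}\ge x\}\le e^{-x}/\epsilon$, which matches the leading-order behaviour of the target estimate.

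The main obstacle is that the theorem's bound $1-\exp\{-e^{-x}(\epsilon^{-1}+e^{-x})\}$ is strictly sharper than $e^{-x}/\epsilon$ throughout the range $x>-\log(1-0.2075/2)$, so the naive union bound is not enough: the outer $1-\exp(-\cdot)$ is a Poisson-type wrapping, and the extra $e^{-2x}$ is a genuine second-order correction. To produce it I would first sharpen the pointwise estimate of Step~1 by controlling $e^{M(X^k)}$ through the Shiryaev--Roberts statistic $S(X^k)$, whose properties ($\mathbf{E}_\infty S(X^k)=k$, the recursion $S(X^k)=(1+S(X^{k-1}))\,p_1(X_k)/p_0(X_k)$, and the martingale $S(X^k)-k$) yield a refined tail $\mathbf{P}\{M(X^k)\ge t\}\le e^{-t}+e^{-2t}$; and then, to absorb this into the exponential, I would replace the union bound by the inequality $\mathbf{P}(\bigcup_k E_k)\le 1-\exp\{-\sum_k q_k\}$ for suitable $q_k$ matching the telescoped sum with its second-order correction, valid once one has a uniform lower bound $\mathbf{P}(\bigcap_k\bar E_k)\ge\exp\{-\sum_k q_k\}$. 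The quantitative threshold $x>-\log(1-0.2075/2)\approx 0.11$ is exactly the range in which both the second-order pointwise bound and the exponential wrapping remain informative and consistent.
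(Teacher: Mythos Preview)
Your plan has a genuine gap: it misses the independence structure that the paper's argument hinges on. The paper does \emph{not} try to refine the pointwise tail of $M(X^k)$ and then massage a union bound into the $1-\exp(-\cdot)$ shape. Instead it first reduces to an \emph{independent} sequence (Lemma~A.2): using the regeneration times $\kappa_1<\kappa_2<\cdots$ of the CUSUM recursion~\eqref{eq10} (the successive times at which $M(X^s)\le 0$), the cycle maxima $\mu_k=\max_{\kappa_k<s\le\kappa_{k+1}}M(X^s)$ are independent with sub-exponential tails, and since $b_{m,\epsilon}$ is nondecreasing and $\kappa_k\ge k$, one gets $\zeta_{m,\epsilon}\le\max_k[\mu_k-b_{m,\epsilon}(k)]$, which is stochastically dominated by $\max_k[e_k-b_{m,\epsilon}(k)]$ with $e_k$ i.i.d.\ standard exponentials. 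Only \emph{then} does the product formula $\mathbf{P}\{\max_k[e_k-b_{m,\epsilon}(k)]<x\}=\prod_k(1-e^{-x-b_{m,\epsilon}(k)})$ become available, and the bound follows from $\log(1-u)\ge -u-\tfrac{u^2}{2(1-u)}$ together with the telescoping identity $\sum_k e^{-b_{m,\epsilon}(k)}=\epsilon^{-1}$ and the numerical fact $\sum_k e^{-2b_{m,\epsilon}(k)}<0.2075$.

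Your route cannot reach this without that independence step. The events $E_k=\{M(X^k)\ge b_{m,\epsilon}(k)+x\}$ are strongly dependent, so the assertion $\mathbf{P}(\bigcap_k\bar E_k)\ge\exp\{-\sum_k q_k\}$ is exactly what needs to be proved, not something you can invoke. Moreover, your proposed ``refinement'' $\mathbf{P}\{M(X^k)\ge t\}\le e^{-t}+e^{-2t}$ is a \emph{weaker} pointwise bound than $e^{-t}$, so it does not help; the extra $e^{-2x}$ in the theorem is not a second-order improvement of the marginal tail but the second-order Taylor term of $\log(1-u)$ applied after independence is in hand. Finally, the threshold $x>-\log(1-0.2075/2)$ is precisely the condition $\tfrac{0.2075}{2(1-e^{-x})}\le 1$ needed to absorb that Taylor remainder, not a range where a pointwise second-order tail remains ``informative.''
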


Therefore we can define  the quantile of order $\alpha$ of $\zeta_{m,\epsilon}$ by
\[
t^\alpha_{m,\epsilon} =\min\bigl\{x:\mathbf{P}\bigl\{\zeta_{m,\epsilon}\ge x \bigr\}\le \alpha\bigr\}.
\]
Fig. \ref{Fig.5} shows the distribution functions and quantiles of $\zeta_{1,\epsilon}$ for $\epsilon=\{0.01,0.2,1\}$ computed with the help of Monte-Carlo method. 

\begin{figure}
\centering
\includegraphics[angle=0,width=0.495\textwidth,height=0.35\textheight]{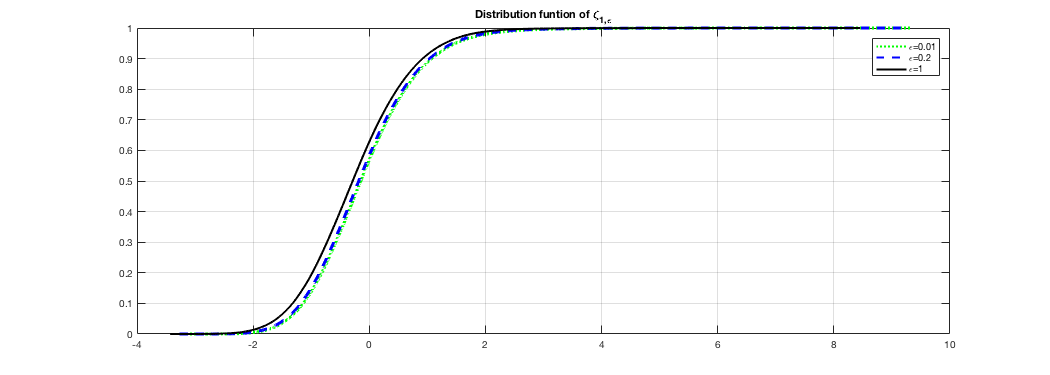}
\hfill
\includegraphics[angle=0,width=0.495\textwidth,height=0.35\textheight]{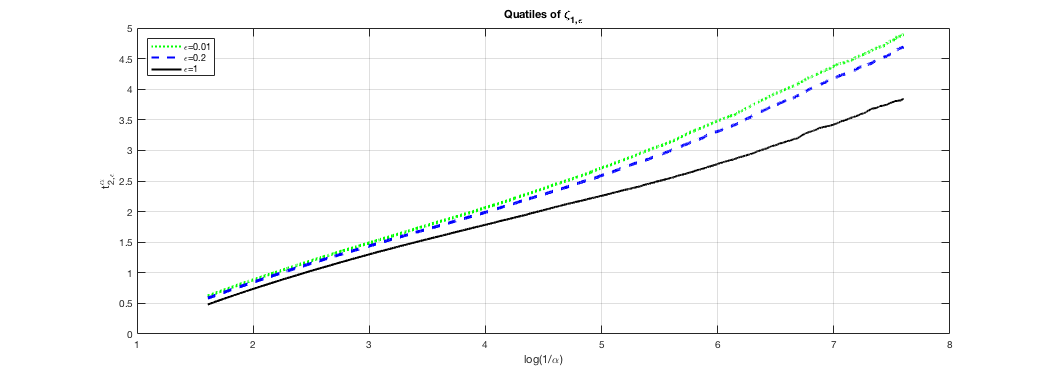}
\caption{Distribution functions 
 and quantiles of $\zeta_{1,\epsilon}$. \label{Fig.5} }
\end{figure}

The next theorem describes principal properties of the stopping time
\[ 
\widetilde{\tau}^\alpha_{m,\epsilon}=\min\bigl\{k: 
M(X^{k})\ge b_{m,\epsilon}(k)+t^\alpha_{m,\epsilon}\bigr\}.
\]

\begin{thm} \label{th-main} 
For any $\epsilon\in (0,1]$
\begin{equation*}
\begin{split}
&\alpha\bigl(\theta;\widetilde{\tau}^\alpha_{m,\epsilon}\bigr)\le \alpha,
\\
&\Delta\bigl(\theta;\widetilde{\tau}^\alpha_{m,\epsilon}\bigr)\le d^\alpha_{m,\epsilon}(\theta),  
\end{split}
\end{equation*}
where $d^\alpha_{m,\epsilon}(\theta)$ is a solution to 
\begin{equation}\label{eq13}
\mu_1 d^\alpha_{m,\epsilon}(\theta)
=b_{m,\epsilon}\bigl[\theta+d^\alpha_{m,\epsilon}(\theta)\bigr]+t^\alpha_{m,\epsilon}.
\end{equation}
\end{thm}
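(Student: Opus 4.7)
The two bounds are proved separately; the first reduces to Theorem~\ref{th.5} by a distributional argument, and the second to a Wald--Jensen calculation for an auxiliary post-change stopping time.

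\textbf{False alarm.} Under $\mathbf{P}_\theta$, the vector $(X_1,\dots,X_{\theta-1})$ is i.i.d.\ $p_0$ and hence distributed as under the no-change measure $\mathbf{P}_\infty$. Since $\{\widetilde{\tau}^\alpha_{m,\epsilon}<\theta\}$ is measurable with respect to $\sigma(X_1,\dots,X_{\theta-1})$, one gets
\[
\mathbf{P}_\theta\bigl\{\widetilde{\tau}^\alpha_{m,\epsilon}<\theta\bigr\}
\le \mathbf{P}_\infty\bigl\{\exists\,k\ge 1:\,M(X^k)\ge b_{m,\epsilon}(k)+t^\alpha_{m,\epsilon}\bigr\}
=\mathbf{P}\{\zeta_{m,\epsilon}\ge t^\alpha_{m,\epsilon}\}\le\alpha,
\]
the last inequality being the definition of $t^\alpha_{m,\epsilon}$ combined with Theorem~\ref{th.5}.

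\textbf{Detection delay.} Taking $k=\theta$ in the maximum that defines $M(X^n)$ gives the pathwise bound $M(X^n)\ge\Lambda_n:=\sum_{i=\theta}^{n}\log[p_1(X_i)/p_0(X_i)]$ for $n\ge\theta$. Hence $(\widetilde{\tau}^\alpha_{m,\epsilon}-\theta)_+\le\nu$, where
\[
\nu:=\min\bigl\{k\ge 0:\,\Lambda_{\theta+k}\ge b_{m,\epsilon}(\theta+k)+t^\alpha_{m,\epsilon}\bigr\}.
\]
Under $\mathbf{P}_\theta$, the variables $Y_j:=\log[p_1(X_{\theta+j})/p_0(X_{\theta+j})]$, $j\ge 0$, are i.i.d.\ with positive mean $\mu_1$, and the sub-linear growth of $b_{m,\epsilon}$ guarantees $\mathbf{E}_\theta\nu<\infty$. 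The definition of $\nu$ together with the monotonicity of $b_{m,\epsilon}$ yields the pathwise pre-stopping inequality
$\sum_{j=0}^{\nu-1}Y_j\le b_{m,\epsilon}(\theta+\nu)+t^\alpha_{m,\epsilon}$,
and Wald's identity turns this into
\[
\mu_1\,\mathbf{E}_\theta\nu\le\mathbf{E}_\theta b_{m,\epsilon}(\theta+\nu)+t^\alpha_{m,\epsilon}
\le b_{m,\epsilon}(\theta+\mathbf{E}_\theta\nu)+t^\alpha_{m,\epsilon},
\]
where the last step is Jensen's inequality using the concavity of $b_{m,\epsilon}$. Writing $g(x):=\mu_1 x-b_{m,\epsilon}(\theta+x)-t^\alpha_{m,\epsilon}$, which is convex, tends to $+\infty$, and vanishes at $d^\alpha_{m,\epsilon}(\theta)$ by~\eqref{eq13}, the display reads $g(\mathbf{E}_\theta\nu)\le 0$. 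Taking $d^\alpha_{m,\epsilon}(\theta)$ to be the largest nonnegative zero of $g$ (unique by convexity), this forces $\mathbf{E}_\theta\nu\le d^\alpha_{m,\epsilon}(\theta)$, completing the proof.

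\textbf{Main obstacle.} The least routine ingredient is the proof that $b_{m,\epsilon}$ is non-decreasing and concave on the relevant range; both properties are needed (the first for the pre-stopping comparison with the threshold, the second for Jensen). These have to be extracted from the intricate formula~\eqref{eq12} by a direct analytic calculation exploiting the recursive definition of $\Phi_m$ and the fact that $\Phi_m(x)$ grows to infinity very slowly. A secondary technical point is verifying $\mathbf{E}_\theta\nu<\infty$ and the integrability hypothesis of Wald's identity, which follow in a standard way from $\mathbf{E}_\theta|Y_0|<\infty$ together with $b_{m,\epsilon}(x)=o(x)$, but must be stated cleanly for the argument above to be rigorous.
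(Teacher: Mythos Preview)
Your argument is correct and uses the same two ingredients as the paper---the pathwise minorant $M(X^n)\ge\sum_{i=\theta}^n\log[p_1(X_i)/p_0(X_i)]$, concavity of $b_{m,\epsilon}$, and Wald's identity---but you combine them in a slightly different order. The paper first replaces the concave threshold $B(k)=b_{m,\epsilon}(\theta+k)+t^\alpha_{m,\epsilon}$ by its tangent line at the point $k_0=d^\alpha_{m,\epsilon}(\theta)$, obtaining an auxiliary stopping time $\tau^{++}$ with a \emph{linear} boundary, and then applies Wald to $\tau^{++}$; the specific choice of tangent point makes the resulting bound collapse to $k_0$. You instead apply Wald directly to $\nu$ and then invoke Jensen on the concave $b_{m,\epsilon}$. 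The two routes are equivalent: your Jensen step is precisely the expectation of the tangent-line inequality, and the paper's optimal choice of $k_0$ is what Jensen delivers automatically. Your packaging is arguably cleaner and avoids introducing $\tau^{++}$; the paper's version has the minor advantage that Wald is applied to a linear-boundary stopping time, for which finiteness of the mean is textbook. (Incidentally, the paper writes ``convex'' where it clearly means ``concave''; your identification of concavity as the relevant property is correct.)
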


The asymptotic behavior of the average delay is described by  the following theorem
\begin{thm}\label{th.7}  For any $\epsilon\in (0,1]$, as ${\alpha}\rightarrow 0 $ and $\theta\rightarrow\infty$
\begin{equation}\label{eq14.0}
\Delta\bigl(\theta;\widetilde\tau_{m,\epsilon}^\alpha\bigr)\le \frac{1}{\mu_1}\biggl\{ \log \frac{\theta}{\alpha}+\sum_{j=1}^m
 \log[\Phi_j(\theta)] +\epsilon \log[\Phi_m(\theta)]
+\log\frac{1}{\epsilon}\biggr\} +o(1).
\end{equation}
\end{thm}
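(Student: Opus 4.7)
By Theorem \ref{th-main} it suffices to analyze the fixed‑point equation
\begin{equation*}
\mu_1 d = b_{m,\epsilon}(\theta + d) + t^\alpha_{m,\epsilon}
\end{equation*}
and show that its solution $d = d^\alpha_{m,\epsilon}(\theta)$ satisfies the stated bound. The proof splits into three pieces: (i) an estimate of $t^\alpha_{m,\epsilon}$ from the tail bound in Theorem \ref{th.5}, (ii) an asymptotic expansion of $b_{m,\epsilon}(x)$ for large $x$, and (iii) a bootstrap argument controlling $d$.

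\textbf{Step 1 (threshold).} I would invert the bound of Theorem \ref{th.5}. Solving $1-\exp\{-e^{-x}[\epsilon^{-1}+e^{-x}]\}=\alpha$ and expanding $-\log(1-\alpha)=\alpha+O(\alpha^2)$ yields $e^{-x}/\epsilon=\alpha(1+o(1))$, hence
\begin{equation*}
t^\alpha_{m,\epsilon}\le \log(1/\alpha)+\log(1/\epsilon)+o(1),\qquad \alpha\to 0.
\end{equation*}

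\textbf{Step 2 (expansion of $b_{m,\epsilon}$).} Writing
\begin{equation*}
b_{m,\epsilon}(x)=-\log\epsilon+\epsilon\log\bigl[\Phi_m(x)\Phi_m(x+1)\bigr]-\log\bigl[\Phi_m^\epsilon(x+1)-\Phi_m^\epsilon(x)\bigr],
\end{equation*}
I would first establish by induction, using $\varphi'(y)=1/y$ and the chain rule, that
\begin{equation*}
\Phi_m'(x)=\prod_{j=0}^{m-1}\Phi_j(x)^{-1},\qquad \Phi_0(x):=x.
\end{equation*}
Then the mean value theorem applied to $y\mapsto y^\epsilon\circ\Phi_m$ on $[x,x+1]$ gives $\Phi_m^\epsilon(x+1)-\Phi_m^\epsilon(x)=\epsilon\Phi_m^{\epsilon-1}(\xi)\Phi_m'(\xi)$ for some $\xi\in(x,x+1)$, and since $\Phi_m(x+1)=\Phi_m(x)+o(1)$ and $\Phi_m'(\xi)/\Phi_m'(x)\to 1$, a brief computation collapses the above identity to
\begin{equation*}
b_{m,\epsilon}(x)=\log x+\sum_{j=1}^{m-1}\log\Phi_j(x)+(\epsilon+1)\log\Phi_m(x)+o(1),\qquad x\to\infty,
\end{equation*}
which is precisely the expansion appearing in \eqref{eq14.0}.

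\textbf{Step 3 (fixed point and bootstrap).} Since $b_{m,\epsilon}$ grows only logarithmically, a crude a priori estimate shows $d=o(\theta)$: if $d\ge\delta\theta$ with $\delta>0$, then the left side of the fixed‑point equation is $\Omega(\theta)$, while the right side is $O(\log\theta)+t^\alpha_{m,\epsilon}$, contradiction. Hence $d/\theta\to 0$, so $\log(\theta+d)=\log\theta+o(1)$ and, by the iterated structure of $\varphi$, $\Phi_j(\theta+d)=\Phi_j(\theta)+o(1)$ for every $j\le m$. Substituting the expansions of Steps 1 and 2 into the fixed‑point identity and dividing by $\mu_1$ yields \eqref{eq14.0}.

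\textbf{Main obstacle.} The delicate part is Step 2: the telescoping of the $\epsilon\log[\Phi_m(x)\Phi_m(x+1)]$ term against the $\log[\Phi_m^\epsilon(x+1)-\Phi_m^\epsilon(x)]$ term through the MVT must be carried out so that the $(\epsilon+1)\log\Phi_m(x)$ coefficient emerges cleanly, and the $o(1)$ error terms have to be controlled uniformly in the regime used in Step 3 (where the argument is $\theta+d$, not $\theta$). The $m$‑level iteration of logarithms makes the bookkeeping the only nontrivial point; once the identity $\Phi_m'(x)=\prod_{j<m}\Phi_j(x)^{-1}$ is in hand, the rest is straightforward asymptotic analysis.
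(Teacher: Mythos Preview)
Your proposal is correct and follows essentially the same route as the paper: the paper likewise (i) inverts Theorem~\ref{th.5} to get $t^\alpha_{m,\epsilon}\le \log(1/\alpha\epsilon)+o(1)$, (ii) obtains the expansion $b_{m,\epsilon}(x)=\log x+\sum_{j=1}^m\log\Phi_j(x)+\epsilon\log\Phi_m(x)+o(1)$ via the identity $\Phi_m'(x)=\prod_{j\le m-1}\Phi_j(x)^{-1}$ (differentiating $\Phi_m^{-\epsilon}$ directly rather than through the MVT), and (iii) feeds these into the fixed-point equation from Theorem~\ref{th-main}. The only visible difference is in Step~3: the paper linearizes $b_{m,\epsilon}(\theta+\cdot)$ at $x_0=(b_{m,\epsilon}(\theta)+t^\alpha_{m,\epsilon})/\mu_1$ by convexity and uses $b'_{m,\epsilon}(k)=O(1/k)$, whereas your bootstrap ($d=o(\theta)$ first, then substitute) achieves the same thing more directly. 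One minor slip: in your displayed rewriting of $b_{m,\epsilon}(x)$ the leading term should be $+\log\epsilon$, not $-\log\epsilon$; this cancels against the $\log\epsilon$ coming from the MVT factor and does not affect your final expansion.
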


\medskip
\noindent
\textbf{Remark.} It is easy to check with a simple algebra that for any given $\theta>1$
\[
\lim_{j\rightarrow\infty} j \log[\Phi_j(\theta)] =2.
\]

\medskip
The robustness of $\widetilde{\tau}_{m,\epsilon}^\alpha$ w.r.t.  a priori geometric distribution of $\theta$ follows now almost immediately from  \eqref{eq14.0}. 
Indeed, suppose $\theta$ is a random variable with $$\mathbf{P}\bigl\{\theta=k\bigr\}=\gamma(1-\gamma)^{k-1}, \quad k\in \mathbb{Z}^+.$$ 
  Then, averaging  
\eqref{eq14.0}  w.r.t. this  distribution, we obtain 
\[
\mathbf{E} \Delta\bigl(\theta;\widetilde\tau_{m,\epsilon}^\alpha\bigr)\le  
\frac{1}{\mu_1}\biggl\{ \log \frac{1}{\alpha\gamma}+\sum_{j=1}^m \log\biggl[\Phi_j\biggl(\frac1\gamma\biggr) \biggr]
+\epsilon\log\biggl[ \Phi_m\biggl(\frac1\gamma\biggr)\biggr]+\log\frac{1}{\epsilon}\biggr\} +o(1)
\] 
as $\alpha,\gamma\rightarrow 0$,
and with \eqref{eq8} we arrive at 
\begin{thm}
As $\alpha,\gamma\rightarrow 0$
\begin{equation*}
\begin{split}
\mathbf{E} \Delta\bigl(\theta;\widetilde\tau_{m,\epsilon}^\alpha\bigr)\le&\mathbf{E} \Delta\bigl(\theta;\tau_{\gamma}^\alpha\bigr)+
\frac{1}{\mu_1}\biggl\{\sum_{j=1}^m \log \biggl[\Phi_j\biggl(\frac1\gamma\biggr) \biggr]+\epsilon 
\log\biggl[\Phi_m\biggl(\frac1\gamma\biggr)\biggr]+\log\frac{1}{\epsilon}\biggr\} +O(1)\\
= &(1+o(1))\mathbf{E} \Delta\bigl(\theta;\tau_{\gamma}^\alpha\bigr),
\end{split}
\end{equation*}
where $\tau_{\gamma}^{\alpha}$ is the optimal Bayesian stopping time (see Theorem \ref{Th1}).
\end{thm}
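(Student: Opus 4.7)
The plan is to take the pointwise bound of Theorem \ref{th.7}, integrate it against the geometric prior $\pi_k=\gamma(1-\gamma)^{k-1}$, and then subtract the asymptotic expression from Theorem \ref{Th3}.

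The key algebraic ingredient is that $\log[\Phi_j(x)]$ is concave on $[1,\infty)$. Indeed, $\varphi(x)=1+\log x$ is increasing and concave, and concavity is preserved under composition with an increasing concave function, so each iterate $\Phi_j$ is concave, and hence $\log\circ\Phi_j$ is again concave (since $\log\varphi$ is itself increasing and concave). Because the geometric law has mean $\mathbf{E}\theta=1/\gamma$, Jensen's inequality yields
\begin{equation*}
\mathbf{E}\log\theta\le\log(1/\gamma),\qquad \mathbf{E}\log[\Phi_j(\theta)]\le\log[\Phi_j(1/\gamma)].
\end{equation*}
Averaging the bound in Theorem \ref{th.7} against $\pi$ then gives
\begin{equation*}
\mu_1\mathbf{E}\Delta(\theta;\widetilde\tau_{m,\epsilon}^\alpha)\le\log\frac{1}{\alpha\gamma}+\sum_{j=1}^m\log[\Phi_j(1/\gamma)]+\epsilon\log[\Phi_m(1/\gamma)]+\log\frac{1}{\epsilon}+o(1).
\end{equation*}
Subtracting Theorem \ref{Th3}, i.e.\ $\mu_1\mathbf{E}\Delta(\theta;\tau_\gamma^\alpha)=\log[1/(\alpha\gamma)]+O(1)$, delivers the first inequality. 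For the $(1+o(1))$ version, I would note that $\log[\Phi_1(1/\gamma)]\sim\log\log(1/\gamma)$ and the subsequent iterates are still smaller, so the bracketed correction is $o(\log[1/(\alpha\gamma)])$, which is precisely the leading order of $\mathbf{E}\Delta(\theta;\tau_\gamma^\alpha)$; this yields the claimed asymptotic ratio.

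The main obstacle is the uniformity of the $o(1)$ remainder in Theorem \ref{th.7}, whose statement is a joint limit in $\alpha\to 0$ and $\theta\to\infty$. To integrate it cleanly against $\pi$, I would split the prior into $\{\theta\le N_\gamma\}$ and $\{\theta>N_\gamma\}$ with $N_\gamma\to\infty$ chosen so that $\gamma N_\gamma\to 0$. On the tail the asymptotic bound applies uniformly, while on the truncated part the non-asymptotic bound $d_{m,\epsilon}^\alpha(\theta)$ of Theorem \ref{th-main} is only logarithmic in $\theta$, so its contribution is $O(\gamma N_\gamma\log N_\gamma)=o(1)$. Everything else reduces to direct algebra combining Jensen's inequality with the iterated-logarithm growth of the $\Phi_j$.
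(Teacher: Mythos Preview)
Your approach is essentially the same as the paper's: average the pointwise bound of Theorem \ref{th.7} over the geometric prior and compare with \eqref{eq8} from Theorem \ref{Th3}. The paper carries this out in a single sentence without justifying the averaging step or the $o(1)$ uniformity, whereas you correctly supply the missing ingredients (concavity of $\log\Phi_j$ plus Jensen, and the truncation argument for the remainder), so your proposal is a fleshed-out version of the paper's sketch.
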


\appendix

\section{Appendix section}\label{app}

\begin{proof}[Proof of Lemma \ref{l1}]
 Since
\[
Y_k=\exp\bigl\{-k\log[\phi(\lambda)]+\lambda L_0(X^k)\bigr\}
\] 
is a martingale with $\mathbf{E}_\infty Y_k=1$, we have
\begin{equation*}
\begin{split}
1=& \mathbf{E}_\infty Y_{\tau_\circ^\alpha}=  \mathbf{E}_\infty Y_{\tau_\circ^\alpha}\mathbf{1}(\tau_\circ^\alpha<\infty)+  \mathbf{E}_\infty Y_{\tau_\circ^\alpha}\mathbf{1}(\tau_\circ^\alpha=\infty)\\ \ge &
\mathbf{E}_\infty Y_{\tau_\circ^\alpha}\mathbf{1}(\tau_\circ^\alpha<\infty)= 
\mathbf{E}_\infty \exp\bigl\{-\tau_\circ^\alpha\log[\phi(\lambda)]+\lambda A\bigr\}\mathbf{1}(\tau_\circ^\alpha<\infty). 
\end{split}
\end{equation*}
\end{proof}
In what follows we denote by $e_k$ be i.i.d. standard  exponential random variables. 
\begin{lemma}  \label{Lem-A.1} For any $m\ge 1$ 
and  $x> -\log(1-0.2075/2)\approx 0.11$
\begin{equation*}
\begin{split}
\mathbf{P}\Bigl\{\max_{k\in \mathbb{Z}^+}[e_k-b_{m,\epsilon}(k)]\ge x\Bigr\}\le 
1-\exp\Bigl\{-{\rm e}^{-x}\bigl[\epsilon^{-1} +{\rm e}^{-x}\bigr]\Bigr\},
\end{split}
\end{equation*}
where $b_{m,\epsilon}(\cdot)$ is defined by \eqref{eq12}.
\end{lemma}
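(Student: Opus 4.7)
The plan is to combine independence of the standard exponentials $e_k$, the telescoping structure built into the definition \eqref{eq12} of $b_{m,\epsilon}$, and an elementary comparison $-\log(1-u)\le u+u^2$. By independence and the memoryless tail $\mathbf{P}(e_k\ge s)=e^{-s}$ ($s\ge 0$),
\[
\mathbf{P}\Bigl\{\max_{k\in\mathbb{Z}^+}[e_k-b_{m,\epsilon}(k)]<x\Bigr\}=\prod_{k=1}^\infty(1-u_k),\qquad u_k:=e^{-x-b_{m,\epsilon}(k)},
\]
provided $x+b_{m,\epsilon}(k)\ge 0$ for every $k$---a condition the numerical threshold $x>-\log(1-0.2075/2)\approx 0.11$ is designed to secure. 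Rewriting \eqref{eq12} gives the telescoping identity
\[
e^{-b_{m,\epsilon}(k)}=\frac{1}{\epsilon\Phi_m^\epsilon(k)}-\frac{1}{\epsilon\Phi_m^\epsilon(k+1)},
\]
and since $\varphi(1)=1+\log 1=1$ one has $\Phi_m(1)=1$ for every $m\ge 1$ by induction, so the series collapses to
\[
\sum_{k=1}^\infty u_k=\frac{e^{-x}}{\epsilon\Phi_m^\epsilon(1)}=\frac{e^{-x}}{\epsilon}.
\]

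Writing $u_k=e^{-x}a_k$ with $a_k:=f(k)-f(k+1)$ and $f(k):=1/(\epsilon\Phi_m^\epsilon(k))$, the sequence $(a_k)$ is nonnegative and decreasing (since $f$ is convex and decreasing on $[1,\infty)$). Applying termwise the elementary inequality $-\log(1-u)\le u+u^2$, valid in a neighborhood of $0$ containing $[0,u_1]$ by the condition on $x$, I would obtain
\[
-\log\prod_{k=1}^\infty(1-u_k)\le \sum_k u_k+\sum_k u_k^2\le \frac{e^{-x}}{\epsilon}+e^{-2x}\sum_k a_k^2.
\]
The remaining task is to show $\sum_k a_k^2\le 1$. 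Since $(a_k)$ is monotone and telescopes to $1/\epsilon$, the sum is controlled by the continuous integral $\int_1^\infty f'(x)^2\,dx$; with $f'(x)=-\Phi_m'(x)/\Phi_m^{\epsilon+1}(x)$ and the iterated substitution $u=\log\Phi_{m-1}(x)$ this reduces to an integral of the form $\int_0^\infty e^{-u}(1+u)^{-2\epsilon-2}\,du$, uniformly bounded by $1$ in $m\ge 1$ and $\epsilon\in(0,1]$. Exponentiating,
\[
\prod_k(1-u_k)\ge \exp\Bigl\{-\tfrac{e^{-x}}{\epsilon}-e^{-2x}\Bigr\}=\exp\bigl\{-e^{-x}[\epsilon^{-1}+e^{-x}]\bigr\},
\]
and taking complements yields the claim.

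The main obstacle is the uniform estimate $\sum_k a_k^2\le 1$ over admissible $m$ and $\epsilon$: since $a_1$ can reach values close to $\log(1+\log 2)\approx 0.527$, the head of the series alone contributes roughly $0.28$, so the dominating integral argument must be sharp enough to leave room for the tail. This requires exploiting the precise iterated-logarithmic shape of $\Phi_m$ together with the identity $\Phi_m(1)=1$. The numerical threshold $x>-\log(1-0.2075/2)\approx 0.11$ is calibrated so that $u_1=e^{-x}a_1$ stays inside the range of validity of the key inequality $-\log(1-u)\le u+u^2$ uniformly in $m$ and $\epsilon$, simultaneously ruling out the degenerate case $x+b_{m,\epsilon}(k)<0$.
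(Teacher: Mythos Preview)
Your argument follows the same skeleton as the paper's: independence of the $e_k$, the exponential tail $\mathbf{P}\{e_k\ge s\}=e^{-s}$, the telescoping identity $\sum_{k\ge 1} e^{-b_{m,\epsilon}(k)}=1/\epsilon$ coming from $\Phi_m(1)=1$, and a second--order control of $\sum_{k\ge 1} e^{-2b_{m,\epsilon}(k)}$. The technical variants are minor. The paper uses the inequality $\log(1-u)\ge -u-\tfrac{u^2}{2(1-u)}$, valid on all of $[0,1)$, and then bounds $1/(1-u_k)\le 1/(1-e^{-x})$ together with the numerical claim $\sum_{k}e^{-2b_{m,\epsilon}(k)}<0.2075$; you instead use $-\log(1-u)\le u+u^2$ on a bounded range and the weaker estimate $\sum_k a_k^2\le \int_1^\infty f'(t)^2\,dt\le 1$ via Cauchy--Schwarz on each unit interval.

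Where your write-up goes astray is the explanation of the threshold. The specific value $x>-\log(1-0.2075/2)$ is \emph{not} designed to keep $u_1=e^{-x}a_1$ in the range of validity of your inequality $-\log(1-u)\le u+u^2$; indeed that inequality holds for $u\lesssim 0.684$, and since $a_1\le \log(1+\log 2)\approx 0.527$ it is automatically satisfied for every $x>0$. In the paper's argument the threshold arises precisely from requiring $\dfrac{0.2075}{2(1-e^{-x})}\le 1$, i.e.\ $1-e^{-x}\ge 0.2075/2$, so that the quadratic remainder $\dfrac{e^{-2x}}{2(1-e^{-x})}\sum_k e^{-2b_{m,\epsilon}(k)}$ is dominated by $e^{-2x}$. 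The constant $0.2075$ is exactly the paper's claimed numerical bound on $\sum_k e^{-2b_{m,\epsilon}(k)}$, and that is the only place it enters. Your route, taken at face value, would actually give the conclusion for all $x>0$ and never produces this particular constant.
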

\begin{proof} It is easy to check with a simple algebra that for any $u\in[0,1)$
\[
\log(1-u)\ge -u-\frac{u^2}{2(1-u)}.
\]
Therefore with this inequality
 we obtain
\begin{equation}\label{eq14}
\begin{split}
&\mathbf{P}\bigl\{\max_{k\in \mathbb{Z}^+}[e_k-b_{m,\epsilon}(k)]\ge x\bigr\}=
1-\prod_{k=1}^\infty \Bigl\{1-\mathbf{P}\bigl\{e_k\ge x+b_{m,\epsilon}(k)\bigr\}\Bigr\}\\
&\quad =1-\exp\biggl\{\sum_{k=1}^\infty \log\Bigl[1-{\rm e}^{-x-b_{m,\epsilon}(k)}\Bigr]\biggr\}\\&\quad \le 
1-\exp\biggl\{-{\rm e}^{-x}\sum_{k=1}^\infty {\rm e}^{-b_{m,\epsilon}(k)}-\frac{{\rm e}^{-2x}}{2(1-{\rm e}^{-x})}\sum_{k=1}^\infty {\rm e}^{-2b_{m,\epsilon}(k)}\biggr\}.
\end{split}
\end{equation}
It follows immediately from the definition of $b_{m,\epsilon}$, see \eqref{eq12}, that
\[
\sum_{k=1}^\infty {\rm e}^{-b_{m,\epsilon}(k)}=\frac{1}{\epsilon \Phi_m(1)}=\frac{1}{\epsilon}.
\]
It is also easy to check numerically that for any $m\ge 1$ and $\epsilon>0$
\[
\sum_{k=1}^\infty {\rm e}^{-2b_{m,\epsilon}(k)}< 0.2075.
\]
Therefore, substituting the above equations in \eqref{eq14}, we complete the proof.
\end{proof}

\begin{lemma} \label{Lem-A.2} For any $x>0$
\begin{equation*}
\mathbf{P}_\infty\Bigl\{\max_{k\in \mathbb{Z}^+} [M(X^k)-b_{m,\epsilon}(k)]\ge x\Bigr\}\le 
\mathbf{P}\Bigl\{\max_{k\in \mathbb{Z}^+}[e_k-b_{m,\epsilon}(k)]\ge x\Bigr\},
\end{equation*}
where random process $M(X^k)$ is defined by \eqref{eq10}.
\end{lemma}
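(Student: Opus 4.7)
The plan is to reduce the lemma to two ingredients: a uniform marginal tail bound $\mathbf{P}_\infty\{M(X^k)\ge y\}\le e^{-y}$ for every $k$ and $y\ge 0$, and Harris's inequality (FKG for product measures) applied to the underlying i.i.d.\ sequence $l_i:=\log[p_1(X_i)/p_0(X_i)]$. Setting $S_j:=\sum_{i=1}^j l_i$, the definition $M(X^k)=\max_{j\le k}\sum_{i=j}^k l_i$ rewrites as $M(X^k)=\max_{j\le k}(S_k-S_{j-1})$, and exchangeability of $(l_1,\ldots,l_k)$ then yields the distributional equality $M(X^k)\stackrel{d}{=}\max_{1\le i\le k}S_i$. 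Because $\mathbf{E}_\infty e^{l_1}=\int p_1(x)\,dx=1$, the sequence $(\exp(S_i))_{i\ge 0}$ is a nonnegative $\mathbf{P}_\infty$-martingale with mean $1$, so Doob's maximal inequality gives
\[
\mathbf{P}_\infty\{M(X^k)\ge y\}=\mathbf{P}_\infty\bigl\{\max_{1\le i\le k}S_i\ge y\bigr\}\le e^{-y}\quad\text{for every }y\ge 0.
\]

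For the second ingredient, observe that $M(X^k)=\max_{j\le k}\sum_{i=j}^k l_i$ is a non-decreasing function of every coordinate $l_i$, so each event $\{M(X^k)<x+b_{m,\epsilon}(k)\}$ is a decreasing (lower-set) event on the product probability space of the $l_i$'s. Harris's inequality asserts that any countable collection of same-sense monotone events on an i.i.d.\ product space is positively correlated; combined with the marginal tail bound,
\[
\mathbf{P}_\infty\Bigl(\bigcap_{k\in\mathbb{Z}^+}\{M(X^k)<x+b_{m,\epsilon}(k)\}\Bigr)\ge \prod_{k\in\mathbb{Z}^+}\mathbf{P}_\infty\{M(X^k)<x+b_{m,\epsilon}(k)\}\ge \prod_{k\in\mathbb{Z}^+}\bigl(1-e^{-x-b_{m,\epsilon}(k)}\bigr).
\]
Taking complements and recognising that the last product equals $\prod_k\mathbf{P}\{e_k<x+b_{m,\epsilon}(k)\}$ by independence of the standard exponentials $e_k$ delivers the claim:
\[
\mathbf{P}_\infty\Bigl\{\max_{k\in\mathbb{Z}^+}[M(X^k)-b_{m,\epsilon}(k)]\ge x\Bigr\}\le 1-\prod_k\bigl(1-e^{-x-b_{m,\epsilon}(k)}\bigr)=\mathbf{P}\Bigl\{\max_{k\in\mathbb{Z}^+}[e_k-b_{m,\epsilon}(k)]\ge x\Bigr\}.
\]

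The main obstacle I anticipate is the application of Harris to a countably infinite family of events, which is handled by a standard truncation: first apply the finite FKG/Harris inequality on the product space of $(l_1,\ldots,l_N)$ to $\{M(X^k)<x+b_{m,\epsilon}(k)\}_{k\le N}$, then pass to $N\to\infty$ by monotone continuity of measure on the nested intersections (the infinite product on the right is convergent because $\sum_k e^{-b_{m,\epsilon}(k)}=1/\epsilon<\infty$, as noted in the proof of Lemma~\ref{Lem-A.1}). Conceptually, the content of the argument is that the dependence among $(M(X^k))_k$ inherited from the i.i.d.\ $l_i$'s is positive, so the independent exponential model serves as a stochastic upper bound at the level of the supremum---exactly what the lemma asserts.
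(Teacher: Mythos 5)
Your proof is correct, but it follows a genuinely different route from the paper's. The paper decomposes the CUSUM trajectory into excursions between the successive regeneration epochs $\kappa_1<\kappa_2<\ldots$ at which $M(X^s)\le 0$; the excursion maxima $\mu_k$ are then independent, each with $\mathbf{P}_\infty\{\mu_k\ge x\}\le {\rm e}^{-x}$ by the martingale bound of Lemma \ref{l1}, and the comparison with the independent exponential model follows from $\kappa_k\ge k$ together with the monotonicity of $b_{m,\epsilon}$. You instead work directly with the dependent family $\bigl(M(X^k)\bigr)_k$: you obtain the marginal tail bound $\mathbf{P}_\infty\{M(X^k)\ge y\}\le {\rm e}^{-y}$ via exchangeability (time reversal turns $M(X^k)$ into the running maximum of the random walk $S_n$) plus Doob's maximal inequality for the nonnegative martingale ${\rm e}^{S_n}$, and you then decouple the events $\{M(X^k)<x+b_{m,\epsilon}(k)\}$ by Harris's association inequality, since each $M(X^k)$ is coordinatewise non-decreasing in the i.i.d.\ increments $l_i$. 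Both ingredients are sound: the needed form of Harris is the association of independent real-valued random variables (Esary--Proschan--Walkup), which extends by induction to finitely many same-sense monotone events, and your truncation-and-limit step handles the countable family; the identification of the resulting product with $\prod_k\mathbf{P}\{e_k<x+b_{m,\epsilon}(k)\}$ is legitimate because $b_{m,\epsilon}(k)\ge 0$. What each approach buys: the paper's regeneration argument manufactures genuinely independent comparison variables but needs $b_{m,\epsilon}$ non-decreasing and the counting bound $\kappa_k\ge k$; yours dispenses with both the excursion decomposition and the monotonicity of $b_{m,\epsilon}$, requiring only the marginal tails, at the price of invoking positive association. The two arguments yield exactly the same final bound, so either can feed into Lemma \ref{Lem-A.1} and Theorem \ref{th.5}.
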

\begin{proof}
Define   random integers $\kappa_1<\kappa_2<\ldots$ by
\[
\kappa_{k}=\min\bigl\{s>\kappa_{k-1}:\, M(X^{s})\le 0\bigr\},\quad t_0=0,
\]
From \eqref{eq10}  it is clear that these random variables are renovation points for the random process $ M(X^k)$ and therefore random variables
\[
\mu_k=\max_{\kappa_k< s\le \kappa_{k+1}} M(X^{s}),\quad k=1,2,\ldots.
\]
are independent. 
Since $b_{m,\epsilon}(k)$ is non-decreasing in $k$ and obviously $\kappa_k\ge k$, we get 
\begin{equation*}
\begin{split}
\max_{k\in \mathbb{Z}^+} [M(X^k)-b_{m,\epsilon}(k)]\le &\max_{k\in\mathbb{Z}^+} \max_{{\kappa_k< s\le \kappa_{k+1}}} [M(X^s)-b_{m,\epsilon}(t_k)]\\
 \le& \max_{k\in \mathbb{Z}^+}[\mu_k-b_{m,\epsilon}(k)].
\end{split}
\end{equation*}
Therefore, to finish the proof, it suffices to notice that by  \eqref{eq10} and Lemma \ref{l1}
$$
\mathbf{P}_\infty\bigl\{\mu_k\ge x\bigr\}\le \mathbf{P}_\infty\biggl\{ \max_{k\in \mathbb{Z}^+}\sum_{s=\theta}^k \log \frac{p_0(X_s)}{p_1(X_s)}\ge x \biggr\}\le \exp(-x).
$$
\end{proof}

Theorem 
 \ref{th.5} follows now immediately from Lemmas \ref{Lem-A.1}, \ref{Lem-A.2}. 
\begin{proof}[Proof of Theorem \ref{th-main}]
It follows  from 
\eqref{eq10} that for all $k\ge \theta$
\begin{equation*}
M(X^k)\ge \sum_{s=\theta}^k \log \frac{p_0(X_s)}{p_1(X_s)}
\end{equation*}
and therefore 
\[
\Delta(\theta;\widetilde{\tau}_{m,\epsilon})\le \mathbf{E}_\theta \tau^+,
\]
where 
\[
\tau^+ =\min\biggl\{k\ge 1:\sum_{s=\theta}^{\theta+k} \log \frac{p_0(X_s)}{p_1(X_s)} \ge b_{m,\epsilon}(\theta+k)+t^\alpha_{m,\epsilon} \biggr\}.
\]

Computing  $\mathbf{E}_\theta \tau^+$ is based on  the famous Wald's identity \cite{W} (see also \cite{B}). For given $\theta\in \mathbb{Z}^+, m\in \mathbb{Z}^+, \epsilon>0$, define  function 
\[
B(k)=b_{m,\epsilon}(\theta+k)+t^\alpha_{m,\epsilon}, \ k\in \mathbb{Z}^+.
\] 
It is clear that $B(\cdot)$ is a convex function and therefore for any $k_0\ge 1$
\[
B(k)\le B(k_0)+B'(x_0)(k-k_0).
\]  
Hence,
\[
\tau^+ \le\tau^{++} =\min\biggl\{k\ge 1:\sum_{s=\theta}^{\theta+k} \log \frac{p_0(X_s)}{p_1(X_s)} \ge B(k_0)+B'(k_0)(k-k_0) \biggr\}.
\]

Next, we obtain by Wald's identity
\[
\mu_1 \mathbf{E}_\theta\tau^{++} \le  B(k_0)+ B'(k_0)\bigl( \mathbf{E}_\theta\tau^{++}-k_0\bigr)
\]
and thus
\begin{equation}\label{eq15}
\mathbf{E}_\theta \tau^{++}\le \frac{B(k_0)-B'(x_0)k_0}{\mu_1-B'(k_0)}.
\end{equation}

To finish the proof, we choose  $k_0=d_{m,\epsilon}^\alpha(\theta)$ (see \eqref{eq13}), and 
notice that
$ B(k_0)=\mu_1 k_0$. Hence, by \eqref{eq15} 
\[
\mathbf{E}_\theta\tau^{++}\le k_0=d_{m,\epsilon}^\alpha(\theta).
\]
\end{proof}

\begin{proof}[Proof of Theorem \ref{th.7}]

It follows immediately from Theorem \ref{th.5} that as  $\alpha \rightarrow 0$
\begin{equation}\label{eq16}
t_{m,\epsilon}^\alpha \le  \log\frac{1}{\alpha\epsilon}+o(1). 
\end{equation}

Next, by convexity of $b_{m,\epsilon}(\cdot)$ 
we obtain for   any $x,x_0$ 
\[
b_{m,\epsilon}(\theta+x)\le b_{m,\epsilon}(\theta+x_0)+b_{m,\epsilon}'(\theta+x_0)(x-x_0).
\]
Therefore, choosing 
\[
x_0=\frac{b_{m,\epsilon}(\theta)+t_{m,\epsilon}^\alpha}{\mu_1}
\]
 we get by \eqref{eq13}
\begin{equation}\label{eq17}
d_{m,\epsilon}^\alpha(\theta)\le \frac{b_{m,\epsilon}(\theta+x_0)+t_{m,\epsilon}^\alpha}{\mu_1-b'_{m,\epsilon}(\theta+x_0)}.
\end{equation}

So, our next step is to upper bound $b_{m,\epsilon}(\cdot)$.
First, notice that
\[
-\frac{1}{\epsilon}\frac{d \Phi^{-\epsilon}_m(x)}{dx}=\Phi^{-1-\epsilon}_m(x) \Phi_{m}'(x)=\frac{\Phi^{-\epsilon}_m(x)}{x}
 \prod_{j=1}^{m}\frac{1}{\Phi_j(x)},
\]
and thus
\begin{equation*}
-\log\biggl[ -\frac{1}{\epsilon}\frac{d \Phi^{-\epsilon}_m(x)}{dx} \biggr]=
\log(x)+\sum_{j=1}^m \log[\Phi_j(x)]+\epsilon\log[\Phi_m(x)].
\end{equation*}

Therefore it follows immediately from this equation and \eqref{eq12} that as $k \rightarrow\infty$ 
\begin{equation}\label{eq18}
b_{m,\epsilon}(k)= \log(k)+\sum_{j=1}^m \log[\Phi_j(k)]+\epsilon\log[\Phi_m(k)]+o(1).
\end{equation}
It is also easy to check that 
\begin{equation}\label{eq19}
b_{m,\epsilon}'(k)=O\biggl(\frac1k\biggr).
\end{equation}

Finally, substituting \eqref{eq16}, \eqref{eq18}, and \eqref{eq19} in \eqref{eq17}, we  complete the proof. 
\end{proof}

% AOS,AOAS: If there are supplements please fill:
%\begin{supplement}[id=suppA]
%  \sname{Supplement A}
%  \stitle{Title}
%  \slink[doi]{10.1214/00-AOASXXXXSUPP}
%  \sdatatype{.pdf}" 
%  \sdescription{Some text}
%\end{supplement}

\end{document}